\newcommand{\R}{\mathbb{R}}
\newtheorem{theorem}{Theorem}[section]
\newtheorem{proposition}[theorem]{Proposition}
\newtheorem{lemma}[theorem]{Lemma}
\theoremstyle{definition}
\newtheorem{definition}[theorem]{Definition}
\theoremstyle{remark}
\begin{document}

\title[Stability of cycles in a Jungle Game]{Stability of cycles and survival in a Jungle Game with four species}

\author[S.B.S.D. Castro]{Sofia B.S.D. Castro}
\address{S.B.S.D. Castro\\Centro de Matem\'atica da Universidade do Porto\\ Rua do Campo Alegre 687, 4169-007 Porto, Portugal
and
Faculdade de Economia do Porto \\ Rua Dr. Roberto Frias, 4200-464 Porto, Portugal}
\email{sdcastro@fep.up.pt}
\thanks{
The authors are grateful to Alexander Lohse for his comments on a previous version and to David Groothuizen-Dijkema for helping with the code of the numerical simulations and for useful conversations. The insights of two anonymous referees are also gratefully acknowledged.\\
All authors were partially supported by CMUP, member of LASI, which is financed by national funds through FCT -- Funda\c{c}\~ao para a Ci\^encia e a Tecnologia, I.P., under the projects UIDB/00144/2020 and UIDP/00144/2020.
}

\author[A.M.J. Ferreira]{Ana M.J. Ferreira}
\address{A.M.J. Ferreira\\Faculdade de Ci\^encias, Universidade do Porto, Rua do Campo Alegre 687, 4169-007 Porto, Portugal and\\Centro de Matem\'atica da Universidade do Porto\\ Rua do Campo Alegre 687, 4169-007 Porto, Portugal} 
\email{up200800262@edu.fep.up.pt}
\thanks{The second author is the recipient of the PhD grant number PD/BD/150534/2019 awarded by FCT - Funda\c{c}\~ao para a Ci\^encia e a Tecnologia which is co-financed by the Portuguese state budget, through the Ministry for Science, Technology and Higher Education (MCTES) and by the European Social Fund (FSE), through Programa Operacional Regional do Norte.
}
\author[I.S. Labouriau]{Isabel S. Labouriau}
\address{I.S. Labouriau\\Centro de Matem\'atica da Universidade do Porto\\ Rua do Campo Alegre 687, 4169-007 Porto, Portugal}
\email{islabour@fc.up.pt}

\begin{abstract}
{The Jungle Game is used in population dynamics to describe cyclic competition among species that interact via a food chain.
The dynamics of the Jungle Game supports a heteroclinic network whose cycles represent coexisting species. The stability of all heteroclinic cycles in the network for the Jungle Game with four species determines that only three species coexist in the long-run, interacting under cyclic dominance as a Rock-Paper-Scissors Game. 
This is in stark contrast with other interactions involving four species, such as cyclic interaction and intraguild predation.

We use the Jungle Game with four species to determine the success of a fourth species invading a population of Rock-Paper-Scissors players.
}
\end{abstract}

\maketitle

\noindent {\em Keywords:} heteroclinic cycle, heteroclinic network, asymptotic stability, essential asymptotic stability, population dynamics

\vspace{.3cm}

\noindent {\em AMS classification:} 34C37, 34A34, 37C75, 91A22, 92D25

\section{Introduction}

Cyclic dominance is observed in competing species when these take turns in increasing and decreasing their numbers.
The mathematical model for interacting species is a set of differential equations.
 The mathematical analog for cyclic dominance,
 as described for example by Hofbauer and Sigmund \cite{HofbauerSigmund}, 
 is a heteroclinic cycle, a union of finitely many equilibria and trajectories that connect them in a cyclic way. Heteroclinic cycles may stand alone or as part of a heteroclinic network, a finite connected union of heteroclinic cycles. In the context of population dynamics, the equilibria typically correspond to the survival of only one species and the connecting trajectories indicate that one species wins over the other, the direction of movement occurring towards the equilibrium representing the winning species whose numbers grow either by feeding on the other or by successfully competing with it. The classic example for cyclic dominance in population dynamics is that of the Uta Stansburiana lizards that play a game of Rock-Scissors-Paper (RSP) to exhibit cyclically increasing numbers of each variety. See, for instance, the work of Allesina and Levine \cite{Allesina_Levine_2011}, May and Leonard \cite{May_Leonard_1975}, Sinervo and Lively \cite{Sinervo_Lively} and Szolnoki \emph{et al} \cite{Mobilia_et_al}. 

The RSP game has been extended to more species. 

One way of doing this preserves the cyclic dominance, as the examples for four species studied by Durney {\em et al.} \cite{Durney2012}, Hua \cite{Hua2013}, Intoy and Pleimling \cite{IntPle2013}, Roman {\em et al.} \cite{Roman2012}, and Szab\'o and Sznaider \cite{SzaSzn2004}, as well as in the review by Dobramysl {\em et al.} \cite{Dobramysl2018}. See Figure~\ref{fig:cyclic-4}. In this case, denoting by $S_i$ the species $i$ and assuming that $S_i$ preys on $S_{i+1}$, $i \mod 4$, there is the formation of alliances of neutral species. Neutral species do not interact with each other and, in this case, the alliances are $S_1$ and $S_3$ as well as $S_2$ and $S_4$. The results are obtained by numerical simulations of the spatial interactions. 
Recently Park {\em et al.} \cite{Park_etal2023}, have studied the dynamics of a model with eight species where two sets of four species fight for space. The symmetry of the interactions between each two species plays a determinant role in the formation of alliances.

\begin{figure}[ht]
\centering
{\includegraphics[width=.8\textwidth]{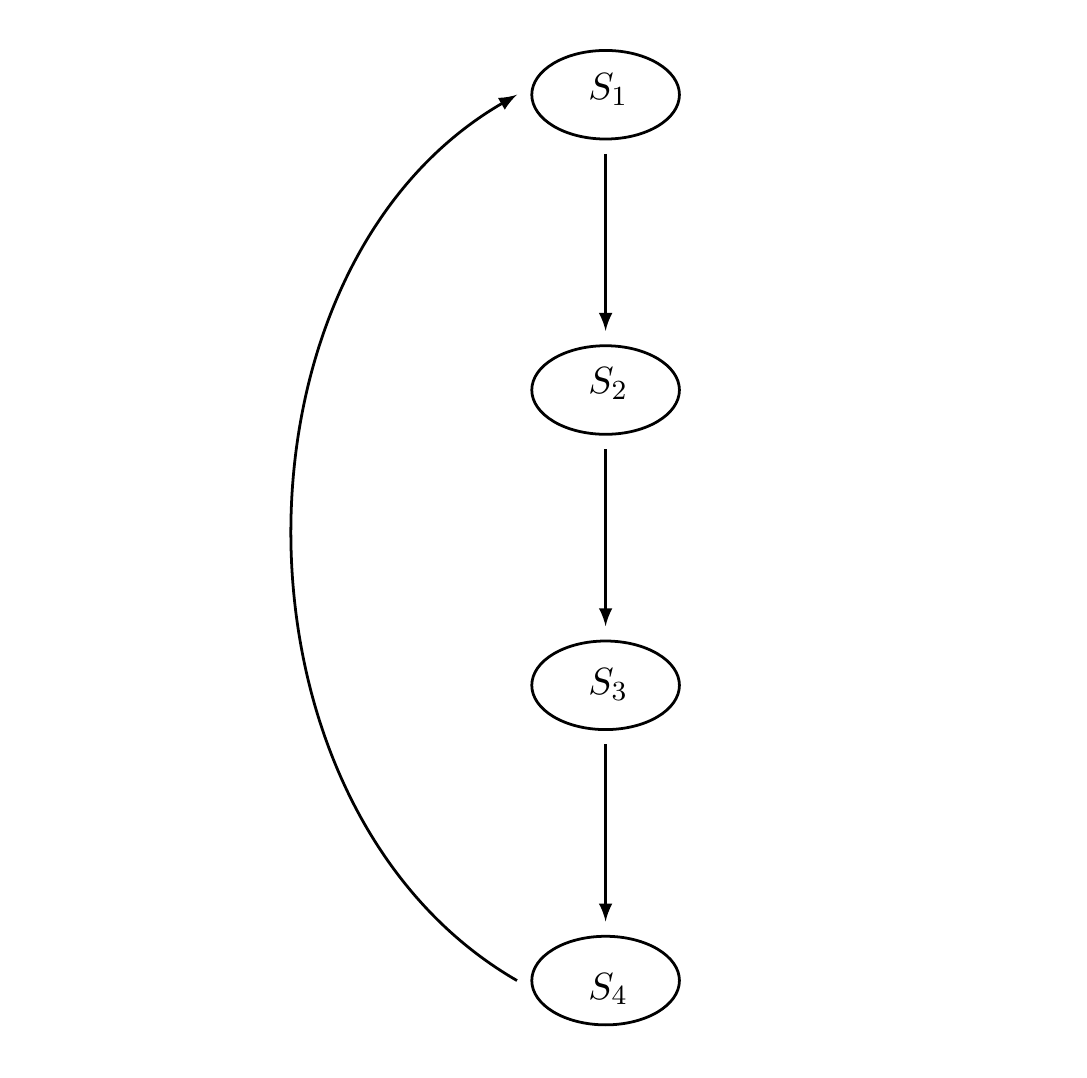}}
\caption{The cyclic relationships of four species: arrows point from winner  
to loser. }
\label{fig:cyclic-4}
\end{figure}

Another way is to include two new species, Lizard and Spock, and adding to the cyclic interaction the property that each species wins over half of the remaining species and loses when confronted with the other half. See the work of Castro {\em et al.} \cite{Castro_Ferreira_Labouriau_Liliana} and Postlethwaite and Rucklidge \cite{Claire_Alastair_2022}.  
The same procedure can be used to obtain a further extension to seven species as in Yang and Park \cite{YanPar2023} who show that in a spatial game\footnote{Further examples of spatial competition involving three, four and five species can be found in the work of Park and co-authors \cite{Park_etal2017}, \cite{ParJan2019}, and \cite{ParJan2021}.}, the level of mobility conditions the number of coexisting species.

In the present article we look at a game that extends RSP by adding a fourth species and constructing a hierarchical interaction to obtain what is called a Jungle Game.
The Jungle Game may be interpreted as describing a hierarchical chain such that each species wins over the ones below, except for the first and last species. The last species wins over the first. This game appears as a chinese board game, also under the name of {\it Dou Shou Qi}, with eight species. See  Kang {\em et al.} \cite{Kang_et_all_2016} 
for a general description of the game and simulations in the case of five species, as well as {the work of Buss and Jackson} \cite{Buss_Jackson_1979}  
for an account of the Jungle Game in coral reefs. As suggested by  \cite{Kang_et_all_2016}
the Jungle Game can also describe the growth and destruction of forests, from the low vegetation state to increasing height and finally destruction by fire.

The interactions present among the four species playing the Jungle Game are depicted in Figure~\ref{fig:4-JG_intro}. These can be precisely described in the following way: let $S_i$, $i=1, \hdots, 4$ denote a species so that the lower the index the closer to the top of the hierarchy the species is. Then $S_1$ preys on $S_2$ and $S_3$, $S_2$ preys on $S_3$ and $S_4$, $S_3$ preys on $S_4$ while $S_4$ preys on $S_1$. The two subsets of three surviving species each, $\{S_1,S_3,S_4\}$ and $\{S_1,S_2,S_4\}$, may coexist in cyclic dominance, that is, playing a RSP game. The survival of a subset of the species also appears as an outcome of the extension of RSP to RSPLS; we do not pursue this similiarity further.

\begin{figure}[ht]
\centering
{\includegraphics[width=.8\textwidth]{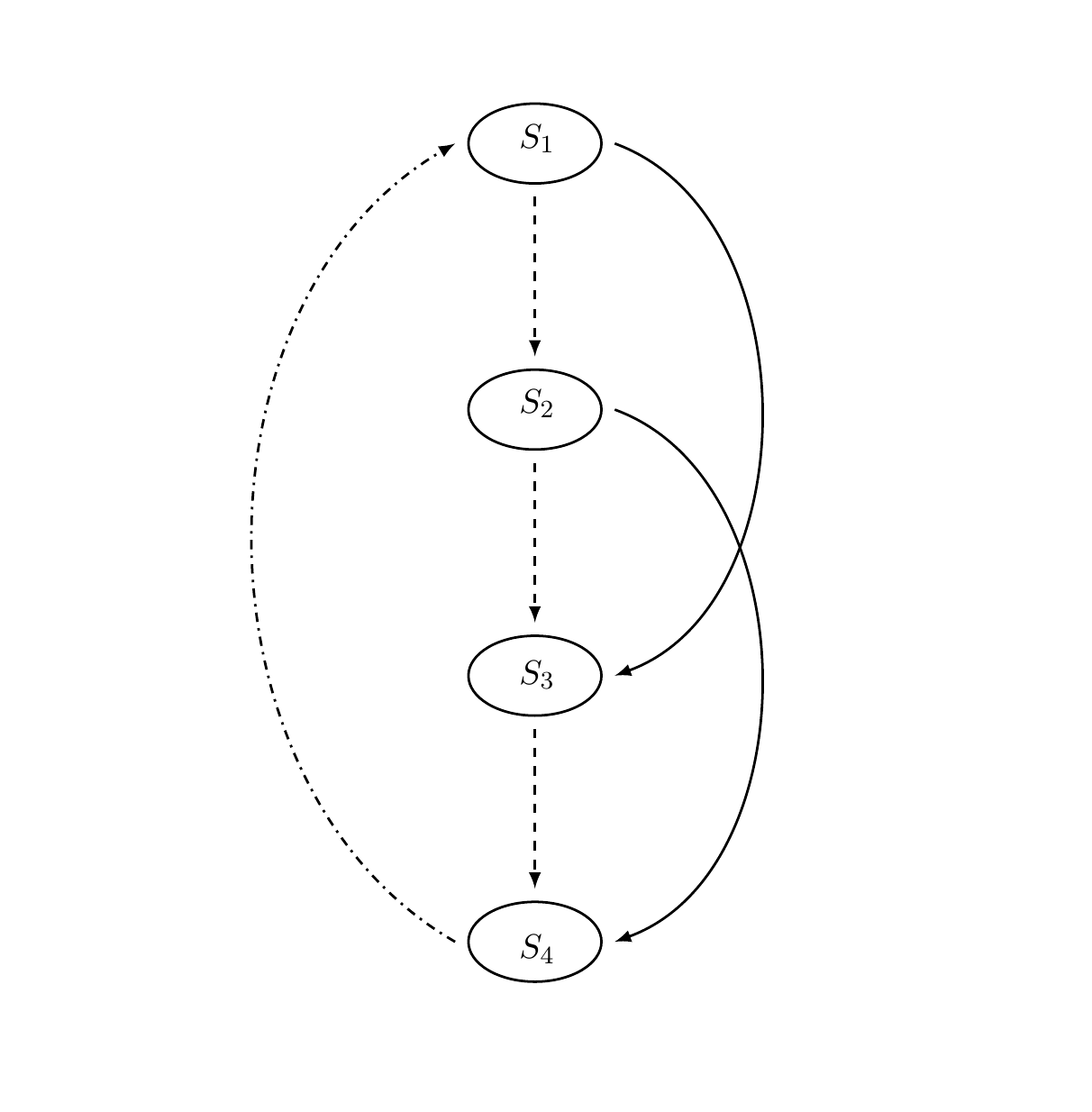}}
\caption{The relationships of four species in the Jungle game; arrows point from winner  
to loser. We distinguish three strengths of interaction: directly from $S_i$ to $S_{i+1}$ by a dashed line, from $S_i$ to $S_{i+2}$ by a solid line, and by a dash-dotted line the interaction between the lowest species $S_4$ and the top species $S_1$.}
\label{fig:4-JG_intro}
\end{figure}

When we represent by $\xi_i$ the equilibrium corresponding to species $S_i$, the connecting trajectories in a heteroclinic cycle point in the opposite direction of those in Figure~\ref{fig:4-JG_intro} since when species $i$ preys on species $j$ the decrease in species $j$ generates an increase in species $i$. The interactions in the Jungle Game with four species lead to a heteroclinic network made of three heteroclinic cycles, one with four equilibria represented by the sequence 
$$
\xi_1 \rightarrow \xi_4 \rightarrow \xi_3 \rightarrow \xi_2 \rightarrow \xi_1,
$$
and two with three equilibria, namely
$$
\xi_1 \rightarrow \xi_4 \rightarrow \xi_2 \rightarrow \xi_1 \;\;\; \mbox{  and   } \;\;\; \xi_1 \rightarrow \xi_4 \rightarrow \xi_3 \rightarrow \xi_1.
$$
 We can associate the stability of each of these cycles to the survival of the species that correspond to its equilibria.
Around a stable cycle there is a large set of initial conditions whose trajectories follow the cycle.
In this way, if the cycle with four equilibria is stable, we expect the coexistence of all four species as time evolves. If one of the cycles with three equilibria is stable we expect one of the species to become extinct while the remaining three species coexist in a RSP game. Which of the species becomes extinct is clear from which heteroclinic cycle is stable: if the stable cycle does not connect the equilibrium $\xi_3$ then the species $S_3$ becomes extinct. Analogously, if the cycle does not connect $\xi_4$, then the species $S_4$ becomes extinct. 

The fact that only these three heteroclinic cycles exist guarantees that there are no other combinations of survival/extinction among the species in the four species Jungle Game. In particular, at most one species becomes extinct and we do not see the formation and survival of neutral alliances as in the strictly cyclic interaction among four species found by the authors of \cite{Durney2012}, \cite{Hua2013}, \cite{IntPle2013}, \cite{Roman2012}, or \cite{SzaSzn2004}.

We depart from the usual spatial or stochastic models and adapt previously established results concerning the stability of heteroclinic networks and their heteroclinic cycles when the model is of competitive Lokta-Volterra interaction. We show that the system of Lotka-Volterra ODEs supports a heteroclinic network consisting of the three heteroclinic cycles above. One of the cycles describes the cyclic dominance among all four species. The other two cycles involve the equilibria corresponding to the two subsets of three species above, describing the cyclic dominance that persists when one, and only one, species becomes extinct and the remaining three interact under a RSP game. Which of these three outcomes is more likely is determined by the stability of the three heteroclinic cycles in the network: only somewhat stable heteroclinic cycles persist in the long-run. We study the stability of each cycle by using the notion of stability index proposed by Podvigina and Ashwin \cite{Podvigina_Ashwin_2011} and results by Lohse \cite{Lohse2015}. We show that only the cycle among the species in the set $\{S_1,S_2,S_4\}$ is stable under some mild assumptions that guarantee that the Jungle Game heteroclinic network is asymptotically stable. The stability of the network as a whole ensures that all initial configurations of the four species converge, in the long-run, to the coexistence of only the three species $S_1$, $S_2$, and $S_4$. We use results from Podvigina {\em et al.} \cite{Podvigina_et_al_2020} to establish the asymtptotic stability of the network.
To the best of our knowledge, we are the first to achieve analitic results concerning extinction while using competitive Lotka-Volterra equations.

A particular instance of the Jungle Game with four species appears in L\"utz {\em et al.} \cite{Lutz2013} where the authors use mean field such that the interaction is of two types only: the cyclic interaction between $S_i$ and $S_{i+1}$ ($i \mod 4$) is stronger than that between $S_i$ and $S_{i+2}$ ($i = 1,2$). From Figure~\ref{fig:4-JG_intro} we see that this is a particular case of ours where the interaction between $S_4$ and $S_1$ is of the same type as that between $S_1$ and $S_2$. The authors of \cite{Lutz2013} illustrate numerically the extinction of the species $S_3$, which we prove in a more general setting. A similar outcome, obtained numerically in a spatial model can be found in L\"utz {\em et al.} \cite{Lutz2017}.

We also interpret the Jungle game as a game describing the introduction of a fourth species, an Alien, into the cyclic dominance of three species modelled by a RSP game.
To the best of our knowledge the literature on models for the interaction of several species addresses the questions: (i) do all species coexists? and (ii) if not, which are the species that become extinct and which those that coexist? Thus our re-interpretation of the Jungle Game as the introduction of the fourth species addresses an issue which appears to be absent from the literature, allowing
 us to also answer a different question: when we have three species coexisting under RSP, how does the appearance of a fourth  species affect survival and extinction? We show that the outcome depends on the nature of the invading species, the Alien. If the Alien is weak, in the sense that it loses against two of the original species, then it becomes extinct and the original species coexist in the long-run. If the Alien is strong, ie.\ it loses against only one of the original species, then one of the original species becomes extinct and is replaced in the RSP by the Alien. We can further determine which species is replaced by the Alien. 

We end this introduction with a note on intraguild predation (IGP). Although IGP is frequently set among four species, the interactioins are not hierarchical. Instead there are two prey that have two types of predators: an IG prey, feeding on the previously mentioned two prey and an IG predator feeding on all three remaining species. See Bl\'e {\em et al.} \cite{Ble2021} and Okuyama \cite{Okuyama2009}. A variation of IGP consists in one prey that is used by two predators and one superpredator that feeds on either all three other species or on only one of the predators. See the work of Wei \cite{Wei2010} for the latter and that of Gakkhar {\em et al.} \cite{Gakkhar2012} and Mondal {\em et al.} \cite{Mondal2022} for the former. These interactions do not produce heteroclinic networks even though the results of \cite{Mondal2022} show the possibility of the coexistence of three of the four species, albeit as one of several other outcomes.

This article proceeds as follows: the next section introduces previous definitions and results in the literature. 
The reader familiar with heteroclinic dynamics may skip (parts of) the next section.
Section~\ref{sec:4-JG} is devoted to the study of the Jungle Game with four species and is divided into four subsections: the first containing results about the asymptotic stability of the network, the second describing the stability of the cycles in the network, the third containing numerical simulations and the fourth describing how the Jungle Game can provide information about the success of an invader. The final section provides some concluding remarks.

\section{Background and notation}\label{sec:background}

Consider a dynamical system described by an ODE
\begin{equation}\label{eq:ODE}
\dot{x} = f(x),
\end{equation}
where $x \in \R^n$ and $f$ is a smooth map from $\R^n$ to itself. For each hyperbolic equilibrium $\xi$ of \eqref{eq:ODE} we denote  its stable and unstable manifolds respectively by $W^s(\xi)$ and $W^u(\xi)$.
Following Ashwin {\em et al.} \cite{AshCasLoh}, given two hyperbolic equilibria of \eqref{eq:ODE}, $\xi_i$ and $\xi_j$, we call $C_{ij} = W^u(\xi_i) \cap W^s(\xi_j),$ a {\em full set of connections} from $\xi_i$ to $\xi_j$. If $\xi_i$ and $\xi_j$ are neither the same equilibrium nor symmetry related, the full set of connections is {\em heteroclinic}. Note that if $\text{dim}(C_{ij})>1$ the connection $C_{ij}$ consists of infinitely many {\em connecting trajectories} $\kappa_{ij}= [\xi_i \rightarrow \xi_j]$, solutions of \eqref{eq:ODE} that converge to $\xi_i$ in backward time and to $\xi_j$ in forward time. 

We are concerned with {\em heteroclinic cycles}, the
union of finitely many hyperbolic saddles, $\xi_1, \hdots , \xi_m$ and connections $C_{j,j+1}$ for $j=1, \hdots , m$ with $\xi_{m+1}=\xi_1$. 
Generically, a connection between two saddles is not robust. However, the existence of flow-invariant spaces where $\xi_{j+1}$ is a sink leads to robust connections of saddle-sink type
between $\xi_j$ and $\xi_{j+1}$.
Such flow-invariant spaces appear naturally in equivariant dynamics, in the form of fixed-point spaces, as well as in game theory and population dynamics, in the form of either coordinate hyperplanes (Lotka-Volterra systems) or hyperfaces of a simplex (replicator dynamics). A connected union of finitely many heteroclinic cycles is a {\em heteroclinic network}.

In the context of heteroclinic networks and cycles the equilibria are sometimes called {\em nodes}.

We focus on the stability of heteroclinic cycles that are part of the same heteroclinic network. In such case there is at least one equilibrium that belongs to more than one cycle. An equilibrium $\xi_i$ such that there are two connecting trajectories $\kappa_{ij}$ and $\kappa_{ik}$ belonging to two different heteroclinic cycles is called a {\em distribution node} in Castro and Garrido-da-Silva \cite{castro2022_switching}. It is clear that, in this case, $W^u(\xi_i)$ is not contained in a single heteroclinic cycle and therefore, cycles in a heteroclinic network are never asymptotically stable. Several intermediate notions of stability exist in the literature. The ones that are relevant for our results are those of \textit{essential asymptotic stability} (e.a.s.) introduced by Melbourne \cite{Melbourne_1991} and refined by Brannath \cite{Brannath1994} and of \textit{complete instability} (c.u., for {\em completely unstable}) as in Krupa and Melbourne \cite{KruMel1995}. A heteroclinic cycle which is e.a.s.\ attracts a set of almost full measure in its neighbourhood whereas a c.u.\ heteroclinic cycle attracts a set of measure zero. Rigorous definitions are given in Subsection \ref{subsec:cycles} below.

The stability of heteroclinic cycles is studied by combining the concept of {\em stability index} of Podvigina and Ashwin \cite{Podvigina_Ashwin_2011} with the results of Lohse \cite{Lohse2015}: a heteroclinic cycle such that all its connections have a positive stability index is e.a.s. The calculation of stability indices is by no means straightforward but those for the Jungle Game with four species can be obtained by results in \cite{Podvigina_Ashwin_2011}. 
We provide more detail in Subsection~\ref{subsec:cycles} below. 
Since existing results apply to 1-dimensional connecting trajectories, when 2-dimensional connections exist as in the Jungle Game, we consider the subcycle consisting of 1-dimensional connecting trajectories inside coordinate planes. Details are given in Section~\ref{sec:4-JG}.
The stability of the heteroclinic network as a whole can be determined by using \cite{Podvigina_et_al_2020} as described in the next subsection.

There is a close relation between stability and survival. Each equilibrium for the dynamics corresponds to the survival of a single species. Because in a heteroclinic cycle no one equilibrium is stable, we do not expect to observe the extinction of all species but one. The species that survive and coexist are those that correspond to equilibria in a stable, either e.a.s. or asymptotically stable, heteroclinic cycle. The heteroclinic connections in the stable heteroclinic cycle determine the interaction among  the surviving species. The species that correspond to equilibria that only belong to unstable heteroclinic cycles
become extinct in the long-run.
Therefore, only species corresponding to heteroclinic cycles with positive stability indices for the connections are expected to survive and coexist.

\subsection{Results for the stability of a network}\label{subsec:back-stab-net}

Podvigina \textit{et al.} \cite{Podvigina_et_al_2020} provide sufficient conditions for the asymptotic stability of ac-networks. These are such that all equilibria are located on coordinate axes (not more than one equilibrium per half-axis), their unstable manifold is entirely contained in the network and the network is invariant under an action of $\mathbb{Z}_2^n$. Their results concerning stability apply to heteroclinic networks without symmetry as well. For ease of reference, we introduce the necessary notation and definitions and transcribe the relevant results below.

Consider a node $\xi_j$ with an incoming connection from a node $\xi_i$ and an outgoing connection towards a node $\xi_k$. We classify the eigenvalues of the Jacobian matrix at $\xi_j$ as in Podvigina~\cite{Podvigina_2012}. Denote the eigenvalues of the Jacobian matrix at $\xi_j$ by $-r_j$ (radial) in the direction of $\xi_j$ to the origin, by $-c_{ji}$ (contracting) in the direction of $\xi_i$, by $e_{jk}$ (expanding) in the direction of $\xi_k$, and by $t_{j,\ell}$, $\ell=1,...,s$ (transverse) the eigenvalues in all other $s$ directions. The radial and contracting eigenvalues are negative, the expanding eigenvalues are positive but the transverse ones can have either sign.

\begin{figure}[ht]
\centering
\includegraphics[width=.8\linewidth]{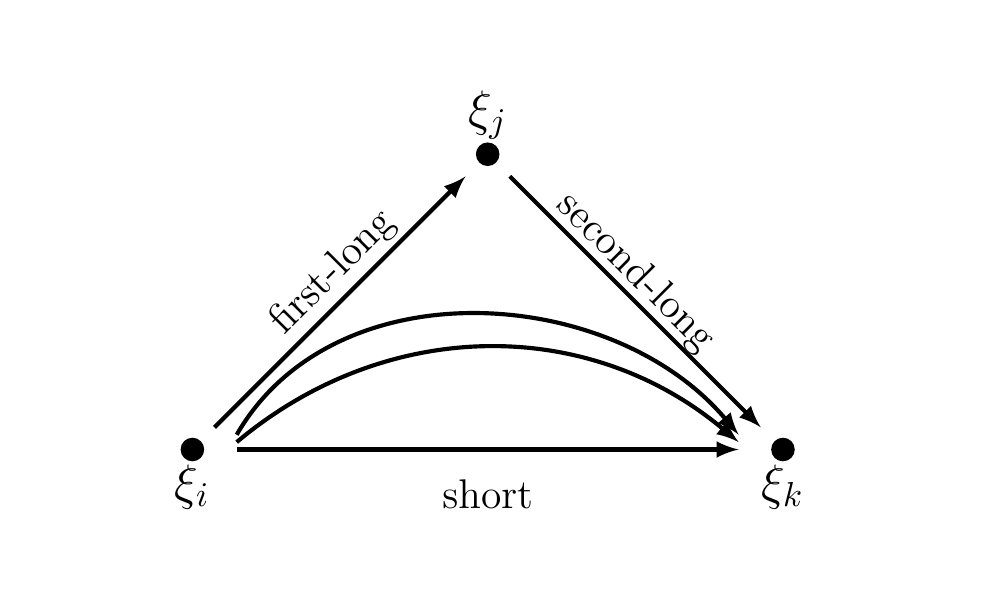}
\caption{A $\Delta$-clique $\Delta_{ijk}$. The short connection is $C_{ik}$, the first-long connection is $C_{ij}$ and the second-long connection is $C_{jk}$.}\label{fig:deltaclique}
\end{figure}

Since a necessary condition for the asymptotic stability of a heteroclinic network is that the unstable manifold of every node be contained in the network, a structure called a $\Delta$-{\em clique} inevitably appears, see Definition 2.1 in \cite{Podvigina_et_al_2020}. Loosely speaking, a $\Delta$-clique $\Delta_{ijk}$, illustrated in Figure~\ref{fig:deltaclique}, is a subset of a heteroclinic network, made of 3 nodes and 3 full sets of connections, that is not a cycle. 
Every $\Delta$-clique, $\Delta_{ijk}$, has a beginning point (b-point), a medium point (m-point) and an end point (e-point), denoted respectively by $\xi_i$, $\xi_j$, and $\xi_k$ in Figure~\ref{fig:deltaclique}.
The short connection of a $\Delta$-clique is 2-dimensional and connects the b- and e-points directly. The first-long and second-long connections are 1-dimensional and together they indicate a transition from the b-point to the e-point through the m-point.

We re-write Lemmas 4.4, 4.6 and 4.8 from \cite{Podvigina_et_al_2020} according to our context, where $c=\max c_{ij}$ and $e=\max e_{ij}$. 
The conditions for asymptotic stability of a network are formulated in Corollary 4.18 of \cite{Podvigina_et_al_2020} in terms of some quantities $\rho_j$ associated to each node in the network that may be computed using the lemmas below. 

\begin{lemma}[\it Lemma 4.4 from \cite{Podvigina_et_al_2020}] \label{lemma1}
Suppose that an equilibrium $\xi_j \in Y$ is not an m-point for any of the $\Delta$-cliques of Y. 
Then
\begin{equation}
\rho_j=min(c/e, 1-t/e)
\end{equation}
\end{lemma}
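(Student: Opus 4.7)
The plan is to analyze the local Poincar\'e map at the node $\xi_j$ and identify the survival exponent $\rho_j$ that governs the proportion of trajectories following the network past $\xi_j$. First I would set up the classical cross-sections $H^{\inn}_j$, transverse to the incoming 1-dimensional connection $C_{ij}$, and $H^{\out}_j$, transverse to the outgoing 1-dimensional connection $C_{jk}$, and use the linearisation of \eqref{eq:ODE} at $\xi_j$ to compute the local map $\phi_j : H^{\inn}_j \to H^{\out}_j$ in coordinates aligned with the eigendirections of the Jacobian. As is standard in this setting (see Podvigina~\cite{Podvigina_2012}), $\phi_j$ is, up to higher-order terms, a power map: each exit coordinate is a product of powers of the entry coordinates, with exponents given by the ratio of the corresponding eigenvalue to the expanding rate $e = e_{jk}$.

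Next I would identify which points of $H^{\inn}_j$ produce trajectories that remain inside a tubular neighbourhood of the network until they reach a section attached to the next node. A trajectory leaves this tube either because its contracting coordinate — growing as a power with exponent involving $c/e$ — exceeds the size of the section, or because one of its transverse coordinates — growing with an exponent related to $-t/e$ — pushes it off the network. The set of admissible entry points is therefore cut out by two independent power-law constraints, and its measure scales as the relevant entry coordinate raised to some exponent; this exponent is the quantity denoted $\rho_j$. An explicit computation of the image of $\phi_j$, as carried out in \cite{Podvigina_et_al_2020}, yields the two candidate bounds $c/e$ and $1-t/e$, and the surviving set is controlled by the more restrictive one, giving $\rho_j = \min(c/e,\, 1 - t/e)$.

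Finally I would invoke the hypothesis that $\xi_j$ is not an m-point of any $\Delta$-clique to rule out a third mechanism. At an m-point, the short 2-dimensional connection of the $\Delta$-clique provides an additional invariant direction along which trajectories may exit $\xi_j$ while still remaining inside the network, which enlarges the surviving set and modifies the exponent accordingly. Since no such 2-dimensional short connection emanates from $\xi_j$ by assumption, every transverse eigenvalue $t_{j,\ell}$ truly points off the network, so the transverse contribution enters only through the term $1 - t/e$ and no correction is needed. The main obstacle is precisely this bookkeeping: one must verify that every eigenvalue classified as transverse at $\xi_j$ corresponds to an off-network direction, and that the linear approximation of the flow is valid on the full domain of $\phi_j$ uniformly as the entry coordinates shrink. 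Once these points are settled, the formula follows directly from the explicit form of $\phi_j$ and the definition of $\rho_j$ used in Corollary 4.18 of \cite{Podvigina_et_al_2020}.
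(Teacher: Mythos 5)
There is nothing in the paper to compare your argument against: this lemma is not proved here at all. The authors explicitly state that they ``re-write Lemmas 4.4, 4.6 and 4.8 from \cite{Podvigina_et_al_2020} according to our context'' --- the statement is imported as a citation, and the burden of proof rests entirely with that reference. So the honest assessment is of your sketch on its own terms, and there it has a structural problem: at the decisive step you write that ``an explicit computation of the image of $\phi_j$, as carried out in \cite{Podvigina_et_al_2020}, yields the two candidate bounds $c/e$ and $1-t/e$.'' That is, the one computation that actually produces the formula is delegated to the very paper whose result you are meant to establish. What you have is a correct description of the standard framework (cross-sections, linearised local maps, power-law scaling of exit coordinates with exponents $c/e$ and $-t/e$, measure estimates on the surviving set), not a derivation of the exponent $\min(c/e,\,1-t/e)$.

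Two of the heuristics you do supply are also off in ways that would matter if you tried to complete the computation. First, the contracting coordinate does not cause trajectories to leave the tube: under the local map it is multiplied by $u^{c/e}$ with $u$ small and $c/e>0$, so it shrinks. Its ratio $c/e$ enters $\rho_j$ because that coordinate becomes part of the distance to the network measured at the \emph{next} node, and $c/e$ records the contraction gained there; it is not an escape mechanism at $\xi_j$. Second, the ``$1$'' in $1-t/e$ is not cosmetic: the transverse exit coordinate scales like $w_t\,u^{-t/e}$, and only after taking the worst case $w_t\sim u\sim d$ (entry distance to the network) does one get $d^{\,1-t/e}$; your sketch mentions only the $-t/e$ part and so cannot account for the additive $1$. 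Finally, your explanation of the m-point hypothesis misplaces the short connection: in a $\Delta$-clique $\Delta_{ijk}$ the $2$-dimensional short connection runs from the b-point $\xi_i$ to the e-point $\xi_k$; it does not emanate from the m-point $\xi_j$. What matters at an m-point is that the closure of this $2$-dimensional set accumulates on $\xi_j$, enlarging the portion of the network seen locally and thereby changing the exponent (to $\min(c/e,1)$ in the one-expanding-eigenvector case of Lemma~\ref{lemma2}). Your hypothesis ``$\xi_j$ is not an m-point'' is invoked for roughly the right purpose, but the geometric mechanism you describe is not the one that operates. To make this a proof you would need to carry out the measure estimate explicitly, as in Section~4 of \cite{Podvigina_et_al_2020} (building on the local-map formalism of \cite{Podvigina_2012}), rather than cite it.
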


\begin{lemma}[\it Lemma 4.6 from \cite{Podvigina_et_al_2020}] \label{lemma2}
Suppose that an equilibrium $\xi_j \in Y$ is an m-point for several (one or more) $\Delta$-cliques of $Y$ and it has one expanding eigenvector. 
Then
\begin{equation}
\rho_j=min(c/e, 1)
\end{equation}
\end{lemma}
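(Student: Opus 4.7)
The statement is a direct transcription of Lemma 4.6 in \cite{Podvigina_et_al_2020} into the notation of this paper, so the plan is to adapt their proof, checking only that the hypotheses translate faithfully and that the local analysis at $\xi_j$ goes through unchanged.

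To sketch the argument, I would set up local coordinates near $\xi_j$ that diagonalise its Jacobian: one radial direction with eigenvalue $-r_j$, one contracting direction (from the preceding node $\xi_i$) with eigenvalue $-c_{ji}$, a single expanding direction (toward $\xi_k$) with eigenvalue $e_{jk}$, and $s$ transverse directions with eigenvalues $t_{j,\ell}$. Then I would form the local Poincar\'e map from an incoming cross-section transverse to the contracting axis to an outgoing cross-section transverse to the expanding axis. In the linearised approximation, a trajectory entering at distance $h$ from the cycle along the contracting direction leaves with distance of order $h^{\min(c/e,\,1-t/e)}$, where $t=\max_\ell t_{j,\ell}$ records the largest transverse rate.

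The structural content of the lemma is then the claim that the transverse term simplifies. The hypothesis that $\xi_j$ has only one expanding eigenvector forces all genuinely transverse eigenvalues to be non-positive, while the m-point status of $\xi_j$ in one or more $\Delta$-cliques guarantees that any remaining outflow along such directions is actually directed into the 2-dimensional short connection $C_{ik}$ of some $\Delta$-clique, which is itself part of the network. Consequently no escape route leaves the network at $\xi_j$, only the competition between $c/e$ and the natural upper cap $1$ remains, and one obtains $\rho_j = \min(c/e, 1)$.

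The main obstacle in writing this out rigorously is bookkeeping: one must verify that the remaining transverse contributions really do decay or are absorbed by the $\Delta$-clique geometry, and that no extra unstable directions appear beyond the 1-dimensional $W^u(\xi_j)$ pointing toward $\xi_k$. Since the detailed Poincar\'e-map computation in \cite{Podvigina_et_al_2020} already works under exactly the hypotheses restated here, no new calculation is required beyond recording the change of notation $(c, e, t)\leftrightarrow (c_{ji}, e_{jk}, t_{j,\ell})$ and checking that the one-expanding-eigenvector assumption rules out the configurations handled separately by subsequent lemmas in that reference.
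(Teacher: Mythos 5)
The paper does not prove this statement at all: it is an explicit transcription of Lemma~4.6 of \cite{Podvigina_et_al_2020}, quoted for later use, so there is no internal proof to compare against. Your overall strategy --- defer to the reference and verify only that the hypotheses and the dictionary $(c,e,t)\leftrightarrow(c_{ji},e_{jk},t_{j,\ell})$ carry over --- is therefore exactly what the authors themselves do, and at that level the proposal is fine.

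That said, the heuristic you offer for \emph{why} the transverse term collapses to the cap $1$ has a geometric slip worth flagging. You assert that the m-point status of $\xi_j$ means ``any remaining outflow along such directions is actually directed into the 2-dimensional short connection $C_{ik}$.'' But the short connection of the $\Delta$-clique $\Delta_{ijk}$ joins the b-point $\xi_i$ to the e-point $\xi_k$ and does not pass through the m-point $\xi_j$, so there is no outflow at $\xi_j$ ``into'' $C_{ik}$. The actual mechanism in \cite{Podvigina_et_al_2020} is that $\rho_j$ measures how the distance \emph{to the whole network} (which includes the 2-dimensional short connection) scales under the local passage past $\xi_j$; because part of the incoming neighbourhood is already close to $C_{ik}$, the best exponent one can certify in that direction is $1$, which is why $\min(c/e,\,1-t/e)$ from the non-m-point case is replaced by $\min(c/e,\,1)$. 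Also note that with a single expanding eigenvector all transverse eigenvalues are non-positive, so $1-t/e\geq 1$ and the m-point formula is genuinely \emph{smaller} (a cap), not merely a simplification --- your phrase ``natural upper cap'' gestures at this but the sketch as written does not explain where the loss to exponent $1$ comes from. None of this affects the validity of citing the lemma, but if you intend to reproduce the proof rather than cite it, that is the step that needs the careful Poincar\'e-map bookkeeping.
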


\begin{lemma}[\it Lemma 4.8 from \cite{Podvigina_et_al_2020}] \label{lemma3}
Suppose that an equilibrium $\xi_j \in Y$ has one contracting eigenvector, two expanding eigenvectors and $\xi_j$ is an m-point for just one $\Delta$-clique. The contracting eigenvector is the f-long vector of the $\Delta$-clique and the expanding eigenvector associated to the eigenvalue $e_2$ is not a s-long vector of the $\Delta$-clique. Then
\begin{equation}
\rho_j=\frac{c}{c+e_2}
\end{equation}
\end{lemma}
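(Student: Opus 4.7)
The plan is to compute $\rho_j$ by an explicit Poincar\'e-map analysis near $\xi_j$, paralleling the proofs of Lemmas~\ref{lemma1} and \ref{lemma2} but adapted to the eigenvector configuration specified here. I would introduce linearising coordinates adapted to the eigenbasis at $\xi_j$: let $u$ denote the coordinate along the contracting (f-long) direction with eigenvalue $-c$; let $w_1$ be the coordinate along the s-long direction with expanding eigenvalue $e_1$ (this is the outgoing direction inside the $\Delta$-clique, leading to the e-point); and let $w_2$ be the coordinate along the remaining expanding direction with eigenvalue $e_2$, which by hypothesis lies off the $\Delta$-clique. Fix a small incoming cross-section $H_{\inn}$ at $|u|=h$, and an outgoing cross-section defined by $\max(|w_1|,|w_2|)=h$.

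Under the linearised flow, a trajectory starting at $(h, w_1^0, w_2^0)\in H_{\inn}$ evolves as
\begin{equation*}
u(t) = h\, e^{-ct},\qquad w_1(t) = w_1^0 e^{e_1 t},\qquad w_2(t) = w_2^0 e^{e_2 t}.
\end{equation*}
Such a trajectory exits the neighbourhood through the s-long face, and hence continues inside the $\Delta$-clique toward the e-point, precisely when $|w_1(T)|=h$ is reached before $|w_2(T)|=h$. Substituting the explicit solutions and eliminating the exit time gives a power-law boundary in the $(w_1^0, w_2^0)$-plane. The quantity $\rho_j$ of \cite{Podvigina_et_al_2020} is the exponent governing how the relative Lebesgue measure of the admissible set in $H_{\inn}$ scales with the contracting coordinate as one shrinks the neighbourhood. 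Balancing the contraction rate $c$ of $u$ against the off-clique expansion rate $e_2$ of $w_2$ through this scaling yields exactly $\rho_j = c/(c+e_2)$, with the s-long expansion rate $e_1$ dropping out because expansion in that direction is precisely what carries trajectories along the intended outgoing connection.

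The main obstacle will be verifying that this is indeed the tightest local restriction on $\rho_j$, i.e.\ ruling out any competing estimate that could reduce the value further. This is where the structural hypotheses are crucial: the assumption that $\xi_j$ is an m-point of \emph{exactly one} $\Delta$-clique guarantees that there is no second s-long face to compete with; the assumption that there is a \emph{single} contracting eigenvector ensures that only one contraction rate $c$ enters; and the assumption that the off-clique expanding direction is not itself an s-long vector of the $\Delta$-clique means that every trajectory escaping in the $w_2$ direction truly leaves the cycle and does not re-enter via a clique short connection. Under these conditions no transverse terms of the type appearing in Lemma~\ref{lemma1} contribute, so no $\min$ is taken, and the linearised estimate is sharp. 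As a consistency check I would compare with Lemma~\ref{lemma2}: in the limit $e_2\to 0$ the second expanding direction disappears and the formula $c/(c+e_2)\to 1$ is consistent with $\rho_j=\min(c/e,1)$ when $c\geq e$, confirming the expected behaviour.
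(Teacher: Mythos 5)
This lemma is not proved in the paper at all: it is transcribed verbatim (with notation adapted) from Lemma~4.8 of \cite{Podvigina_et_al_2020}, so there is no in-paper proof to match your argument against. Judged on its own terms, your proposal has a genuine gap at the decisive step. You set up the correct linearised passage $u(t)=he^{-ct}$, $w_1(t)=w_1^0e^{e_1t}$, $w_2(t)=w_2^0e^{e_2t}$, but the quantity you then propose to measure --- the relative Lebesgue measure of the set of entry points that exit through the s-long face before the $w_2$-face --- does not produce the exponent $c/(c+e_2)$. Eliminating the exit times from $|w_1(T)|=h$ and $|w_2(T)|=h$ gives the boundary $|w_2^0|\sim |w_1^0|^{e_2/e_1}$, an exponent involving only the two expanding rates; and the contracting coordinate at the exit section scales like $|w_1^0|^{c/e_1}$ or $|w_2^0|^{c/e_2}$, i.e.\ like $c/e$. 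Neither route yields $c/(c+e_2)$, and the sentence ``balancing the contraction rate $c$ against the off-clique expansion rate $e_2$ yields exactly $\rho_j=c/(c+e_2)$'' is an assertion, not a derivation. Your closing consistency check is also not probative: letting $e_2\to 0$ makes the second expanding direction neutral rather than absent, and $c/(c+e_2)\to 1$ only matches $\min(c/e,1)$ in the subcase $c\ge e$.

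The exponent $c/(c+e_2)$ in fact comes from monitoring the distance to the network \emph{throughout} the passage, not at the cross-sections. Because the $e_2$-eigenvector is not the s-long vector, the locally relevant part of the network near $\xi_j$ in the $(u,w_2)$-plane is only the union of the two invariant axes, not the whole plane (the two-dimensional sheet of the network through $\xi_j$ aligns with the $(u,w_1)$-plane). A trajectory entering at distance $d=|w_2^0|$ from that union therefore makes an excursion whose distance from the two axes is $\min\bigl(|u(t)|,|w_2(t)|\bigr)$, maximised when $he^{-ct}=de^{e_2t}$, i.e.\ at $t^*=\tfrac{1}{c+e_2}\ln(h/d)$, where the common value is $h(d/h)^{c/(c+e_2)}$. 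That mid-passage maximum is what forces $\rho_j=c/(c+e_2)$; it is strictly smaller than the $c/e$-type bounds your cross-section bookkeeping would give, so your estimate is not sharp and would overstate the stability of the network. To repair the proof you would need to adopt the distance-to-network formulation of $\rho_j$ used in \cite{Podvigina_et_al_2020} and carry out this interior maximisation, using the hypotheses on the f-long and s-long vectors precisely to decide which two-dimensional coordinate planes at $\xi_j$ are, and are not, contained in the network.
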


Corollary 4.18 from \cite{Podvigina_et_al_2020} states that if the product of the $\rho_j$ for each cycle in the network is bigger than one then the network is asymptotically stable.

\subsection{Results for the stability of cycles}\label{subsec:cycles} 

The stability of the heteroclinic cycles in a network is determined by using the stability index $\sigma$ introduced in \cite{Podvigina_Ashwin_2011} as we proceed to explain.

Consider the flow $\Phi(t,x)$ of a differential equation in $\R^n$. Let $X \subset \R^n$ be a flow-invariant set with compact closure $\overline{X}$ and for $\epsilon>0$ its  $\epsilon$-neighbourhood
$ B_{\epsilon}(X)=\{x \in \R^n: d(x,\overline{X})<\epsilon\}.$
For $\delta>0$, the $\delta$-local basin of attraction of $X$ is:
$$
 \mathcal{B}_{\delta}(X)=\left\{x\in\mathbb{R}^n : \Phi(t,x)\in B_\delta(X)  \text{ for any } t\geq 0 \text{ and } \omega(x)\subset X\right\}.
$$

\begin{definition}
The set $X$ is {\em essentially asymptotically stable} (e.a.s.) if the measure of its $\delta$-local basin of attraction, $\mathcal{B}_{\delta}(X)$, tends to full measure in a $\epsilon$-neighbourhood, $B_{\epsilon}(X)$, of $X$ as $\delta$ and $\epsilon$ become small, that is, if
 \begin{equation*}
  \lim_{\delta \to 0} \left[\lim_{\epsilon \to 0}\frac{\ell\left(B_{\epsilon}(X) \cap  \mathcal{B}_{\delta}(X) \right)}{\ell\left(B_{\epsilon}(X) \right)}\right]=1
 \end{equation*}
where $\ell$ is the Lebesgue measure.
\end{definition}

\begin{definition}
 The set $X$ is {\em completly unstable} (c.u.) if there exists $\delta >0$ such that the $\delta$-local basin of attraction of $X$ is of measure zero, that is, $\ell\left(\mathcal{B}_{\delta}(X)\right)=0$
\end{definition}

The local stability index of a cycle at a point in a connection, defined below, measures how much of its neighbourhood returns and is attracted to it after following the cycle to which the connection belongs. Results in \cite{Podvigina_Ashwin_2011} and Lohse \cite{Lohse2015} relate the stability of a cycle with the sign of the stability indices of every connection in that cycle. If the stability index is positive for every connection then the cycle is e.a.s. and if all the stability indices are equal to $-\infty$ then the cycle is c.u.

\begin{definition}
The {\em local stability} index $\sigma(x)$ of a cycle $X$ at a point $x\in X$ in a connection is the difference $\sigma(x)=\sigma_+(x)-\sigma_-(x)$ where
$$
\sigma_+(x)=\lim_{\delta\to 0}\lim_{\varepsilon\to 0}
\dfrac{\ln(\Sigma_{\varepsilon,\delta}(x)}{\ln\varepsilon}
\qquad
\sigma_-(x)=\lim_{\delta\to 0}\lim_{\varepsilon\to 0}
\dfrac{\ln(1-\Sigma_{\varepsilon,\delta}(x)}{\ln\varepsilon}
$$
where 
$$
\Sigma_{\varepsilon,\delta}(x)=\dfrac{\ell\left(B_\varepsilon(x)\cap \mathcal{B}_{\delta}(X) \right) }{\ell(B_\varepsilon(x))}
$$
\end{definition}

The stability index is calculated in \cite{Podvigina_Ashwin_2011} for simple cycles of types $B$ and $C$ as defined by Krupa and Melbourne \cite{krupa_melbourne_2004}. Suppose that $\Gamma \subset O(n)$ is a finite Lie group acting linearly on $R^n$. A simple robust heteroclinic cycle $X \subset \R^4$ is of type B if there is a fixed-point subspace $Q$ with $\dim Q = 3$ such that $X \subset Q$. 

We transcribe, in the following two lemmas, the results from \cite{Podvigina_Ashwin_2011} that we are going to use. The superscript ``$-$'' indicates that $-I \in \Gamma$ and the subscript ``$3$'' indicates the number of equilibria in the cycle. The function $f^\textnormal{index}$ is used to calculate the local stability indices is defined as $f^\textnormal{index}(\alpha,\beta)=f^+(\alpha,\beta)-f^+(-\alpha,-\beta)$ where
\[
f^{+}(\alpha,\beta)=\begin{cases}
+\infty, & \alpha\geq0,\; \beta\geq0\\[0.1cm]
0, & \alpha\leq0,\; \beta\leq0\\[0.1cm]
-\dfrac{\beta}{\alpha}-1, & \alpha<0,\; \beta>0, \; \dfrac{\beta}{\alpha}<-1\\[0.5cm]
0, & \alpha<0,\; \beta>0, \; \dfrac{\beta}{\alpha}>-1\\[0.5cm]
-\dfrac{\alpha}{\beta}-1, & \alpha>0,\; \beta<0, \; \dfrac{\alpha}{\beta}<-1\\[0.5cm]
0, & \alpha>0,\; \beta<0, \; \dfrac{\alpha}{\beta}>-1.
\end{cases}
\]

For a heteroclinic cycle in $\R^4$ the Jacobian matrix at any equilibrium $\xi_j$ has exactly four eigenvalues, one of each type. Let $a_j=\frac{c_j}{e_j}$ and $b_j=-\frac{t_j}{e_j}$ where $-c_j$ is the contracting eigenvalue, $e_j$ the expanding eigenvalue and $t_j$ the transverse eigenvalue.

\begin{lemma}\label{lemma1:calc-index}
For a cycle of type $B_3^-$ with $b_1<0, b_2>0$ and $b_3>0$, the stability indices along connecting trajectories are as follows: 
\begin{enumerate}
  \item If $a_1a_2a_3<1$ or $b_1a_2a_3+b_3a_2+b_2<0$, then the cycle is not an attractor and the stability indices are $-\infty$.
  \item If $a_1a_2a_3>1$ and $b_1a_2a_3+b_3a_2+b_2>0$, then the stability indices are $\sigma_1=f^\textnormal{index}(b_1,1) $, $\sigma_2=+\infty$ and $\sigma_3=f^\textnormal{index}(b_3+b_1a_3,1)$.
\end{enumerate}
\end{lemma}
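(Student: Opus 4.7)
The plan is to follow the Poincaré map / log-coordinate approach of Podvigina and Ashwin, computing the linearised return map around the three-node cycle and then reading off stability and indices from its eigenstructure.

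First, near each equilibrium $\xi_j$, $j=1,2,3$, introduce coordinates aligned with the four eigendirections so that the linearised flow is diagonal with eigenvalues $-r_j$, $-c_j$, $e_j$, $t_j$. Fix a small $h>0$ and define cross-sections $H_j^\inn=\{\text{contracting coord}=h\}$ and $H_j^\out=\{\text{expanding coord}=h\}$ close to $\xi_j$. Using the explicit linearised flow, the local map $\varphi_j^\text{loc}:H_j^\inn\to H_j^\out$ has, after passing to logarithms of absolute values of coordinates, an affine form whose coefficients are exactly $a_j=c_j/e_j$ and $b_j=-t_j/e_j$. Because the cycle is of type $B_3^-$, the global maps $\varphi_j^\text{glo}:H_j^\out\to H_{j+1}^\inn$ running along the 1-dimensional connections are, by equivariance with respect to the reflections giving the cycle its $B^-$ label, essentially coordinate permutations and so contribute no non-trivial multiplicative factors in log-coordinates.

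Second, compose to obtain the return map $\pi=\varphi_3^\text{glo}\circ\varphi_3^\text{loc}\circ\varphi_2^\text{glo}\circ\varphi_2^\text{loc}\circ\varphi_1^\text{glo}\circ\varphi_1^\text{loc}$ on $H_1^\inn$. In log-coordinates this is an affine map with $2\times2$ linear part $M$ acting on the pair (contracting, transverse) coordinates. A direct bookkeeping of the three compositions shows that one eigenvalue-like quantity of $M$ equals $a_1a_2a_3$, controlling stability along the cycle (contracting direction), while the transverse direction contributes the expression $b_1a_2a_3+b_3a_2+b_2$ after telescoping the transverse contributions $b_j$ against the cycle-direction multipliers $a_j$ acquired between nodes. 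This is the step one has to execute with care.

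Third, apply the criterion of Podvigina and Ashwin: the cycle is asymptotically stable (hence an attractor of positive measure) iff both quantities are positive, i.e.\ $a_1a_2a_3>1$ and $b_1a_2a_3+b_3a_2+b_2>0$. If either is reversed then the linearised return map has an eigenvalue of modulus $>1$ on a direction of positive measure in the cross-section, so nearby trajectories escape and the local basin has measure zero at every point of the cycle; by the definition of $\sigma$ one gets $\sigma_j=-\infty$ for $j=1,2,3$, yielding part (1).

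Fourth, assuming the attracting case, compute the three indices from the transverse dynamics at each connection. At $\xi_2$ the assumption $b_2>0$ (so $t_2<0$) means the transverse eigenvalue is contracting, so the whole local cross-section at the connection $\xi_2\to\xi_3$ is absorbed, giving $\sigma_2=+\infty$. At the connections involving $\xi_1$ and $\xi_3$, where the transverse eigenvalues expand ($b_1<0$, $b_3>0$), one has to measure the proportion of the $\varepsilon$-ball whose first-return orbit stays inside a $\delta$-tube of the cycle; by the explicit formulas in Section~3 of Podvigina–Ashwin this proportion scales as a power of $\varepsilon$ whose exponent is produced exactly by $f^\textnormal{index}(b_1,1)$ at the first connection and by $f^\textnormal{index}(b_3+b_1a_3,1)$ at the third, the combination $b_3+b_1a_3$ appearing because the transverse contribution from $\xi_1$ must be transported through the passage near $\xi_3$, acquiring the factor $a_3$.

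The main technical obstacle is the second step: tracking the transverse coordinates through the three local passages and isolating exactly the combination $b_1a_2a_3+b_3a_2+b_2$ (and not a different symmetric expression) requires fixing orientations consistent with the $B_3^-$ symmetry and the sign assumptions on the $b_j$. Once that linear algebra is done correctly, the rest follows mechanically from the general machinery in \cite{Podvigina_Ashwin_2011}.
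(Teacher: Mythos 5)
There is no ``paper's own proof'' to compare against here: the authors explicitly \emph{transcribe} this lemma from Podvigina and Ashwin \cite{Podvigina_Ashwin_2011} (it is their computation of stability indices for simple cycles of type $B_3^-$ in $\R^4$) and offer no argument of their own. Your sketch reconstructs the strategy of that original source --- local maps near each node written in logarithmic coordinates with coefficients $a_j=c_j/e_j$ and $b_j=-t_j/e_j$, global maps reduced to permutations, composition into a return map whose transition data yield $a_1a_2a_3$ and $b_1a_2a_3+b_3a_2+b_2$, and then the $f^{\textnormal{index}}$ formulas --- so the approach is the right one. However, as written it is an outline rather than a proof: the decisive bookkeeping that produces exactly the combination $b_1a_2a_3+b_3a_2+b_2$ and the three index formulas is asserted, and you yourself flag it as ``the step one has to execute with care'' without executing it.

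Two local points also need repair. First, in your fourth step you say the transverse eigenvalues expand at both $\xi_1$ and $\xi_3$ ``($b_1<0$, $b_3>0$)''; but $b_3>0$ means $t_3<0$, i.e.\ the transverse direction at $\xi_3$ \emph{contracts}. The reason $\sigma_3$ can nevertheless be finite is the one you give in your final clause: the expansion occurs at the \emph{next} node $\xi_1$, and its contribution is transported back with the weight $a_3$, producing $b_3+b_1a_3$, which may be negative even though $b_3>0$. Second, in case (2) the cycle cannot be ``asymptotically stable'': since $t_1>0$, a one-dimensional branch of $W^u(\xi_1)$ leaves the three-dimensional subspace containing the cycle, so the conditions $a_1a_2a_3>1$ and $b_1a_2a_3+b_3a_2+b_2>0$ only guarantee that the cycle attracts a set of positive measure (fragmentary asymptotic stability in the terminology of \cite{Podvigina_Ashwin_2011}); the indices then quantify how large that set is near each connection. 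With those corrections, and with the composition computation actually carried out, your argument would reproduce the proof in the cited source.
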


\begin{lemma}\label{lemma2:calc-index}
 For a cycle of type $B_3^-$ with $b_1<0, b_2<0$ and $b_3>0$, the stability indices along connecting trajectories are as follows:
\begin{enumerate}
  \item If $a_1a_2a_3<1$ or $b_2a_1a_3+b_1a_3+b_3<0$ or $b_1a_2a_3+a_2b_3+b_2<0$, then the cycle is not an attractor and the stability indices are $-\infty$.
  \item If $a_1a_2a_3>1$ and $b_2a_1a_3+b_1a_3+b_3>0$ and $b_1a_2a_3+a_2b_3+b_2>0$, then the stability indices are $\sigma_1=\min(f^\textnormal{index}(b_1,1), f^\textnormal{index}(b_1+b_2a_1,1))$, $\sigma_2=f^\textnormal{index}(b_2,1)$ and $\sigma_3=f^\textnormal{index}(b_3+b_1a_3,1)$.
\end{enumerate}
\end{lemma}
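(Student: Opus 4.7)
The plan is to follow the general framework from \cite{Podvigina_Ashwin_2011} for computing stability indices of simple heteroclinic cycles of type $B_3^-$, since the statement is a transcription of a result proved there. The new feature compared with Lemma~\ref{lemma1:calc-index} is that two of the transverse eigenvalues (those at $\xi_1$ and $\xi_2$, where $b_j<0$) are expanding instead of just one, and the calculation must be adapted accordingly.

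First I would introduce, at each equilibrium $\xi_j$, local coordinates $(r_j,u_j^-,u_j^+,v_j)$ aligned respectively with the radial, contracting, expanding and transverse eigendirections, and set up Poincaré cross-sections $H^{\textnormal{in}}_j$ and $H^{\textnormal{out}}_j$ transverse to the incoming and outgoing heteroclinic connections. In these coordinates the local passage map $\phi_j^{\textnormal{loc}}\colon H^{\textnormal{in}}_j \to H^{\textnormal{out}}_j$ is, to leading order, a monomial map whose exponents are precisely $a_j=c_j/e_j$ and $b_j=-t_j/e_j$. The global connecting maps between consecutive sections are approximated by linear isomorphisms, so the full return map $\Pi$ is a composition of three such monomial blocks. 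Passing to logarithmic coordinates, the linearisation of $\Pi$ is governed by a $3\times 3$ transition matrix whose leading eigenvalue on the three-dimensional carrying subspace is $a_1a_2a_3$, and whose cumulative transverse exponents at the three successive sections are precisely the cyclic sums $b_1a_2a_3+a_2b_3+b_2$ and $b_2a_1a_3+b_1a_3+b_3$ appearing in the hypotheses.

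From here the argument splits. The inequality $a_1a_2a_3>1$ gives asymptotic stability within the three-dimensional invariant subspace that contains the cycle, by the classical Krupa--Melbourne criterion. The two transverse inequalities guarantee that, on average around the cycle, perturbations off the carrying subspace still contract back to it; if either cumulative exponent is negative, a standard argument produces a positive-measure cusp of initial conditions that is repelled, giving the c.u.\ conclusion of part (1). For part (2), the explicit formulas for $\sigma_1,\sigma_2,\sigma_3$ then follow from the \cite{Podvigina_Ashwin_2011} recipe: at each section one identifies the dominant transverse monomial controlling the measure of the returning set and evaluates $f^{\textnormal{index}}$ on its normalised exponents $(\alpha,\beta)$. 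The minimum appearing in $\sigma_1$ reflects the fact that two competing expanding transverse monomials now contribute at the first cross-section, one coming from the instability at $\xi_1$ and the other pushed forward from the instability at $\xi_2$.

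The main obstacle is the combinatorial bookkeeping required when two of the $b_j$ are negative: one must partition a neighbourhood of each connection into regions according to which monomial dominates, and then verify that these partitions are consistent under one full iterate of the return map. This is what forces both the second transverse inequality in part (1) and the $\min$ in the expression for $\sigma_1$, and it is the only real new content compared with the proof of Lemma~\ref{lemma1:calc-index}. Once this analysis is carried out — as it is in full generality in \cite{Podvigina_Ashwin_2011} — the proof of the present lemma reduces to checking that our cycle fits their $B_3^-$ classification with the prescribed signs of $b_1,b_2,b_3$, after which their formulas apply verbatim.
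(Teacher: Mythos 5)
The paper offers no proof of this lemma at all: it is stated explicitly as a transcription of results from \cite{Podvigina_Ashwin_2011}, and your proposal ultimately rests on exactly the same citation, preceded by an accurate sketch of that paper's machinery (monomial local maps, transition matrices, the cyclic transverse sums, and the role of the $\min$ when two $b_j$ are negative). So your approach coincides with the paper's, just with more expository detail about where the formulas come from.
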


The stability indices above are of a single connecting trajectory.

\section{The Jungle Game with four species}\label{sec:4-JG}

The Jungle Game with four species describes the hierarchical interaction among four species, $S_1$, $S_2$, $S_3$, and $S_4$. We assume $S_1$ is at the top and $S_4$ at the bottom of the hierarchy so that  $S_1$ wins over $S_2$ and  $S_3$, $S_2$ wins over $S_3$ and $S_4$, $S_3$ wins over $S_4$, and $S_4$ wins over $S_1$. This hierarchy leads to an increase in the numbers of the winning species as a result of a decrease of the losing species. The relations among the species and the graph of the corresponding heteroclinic network appear in Figure \ref{fig:4-species}.

\begin{figure}[ht]
\centering
{\includegraphics{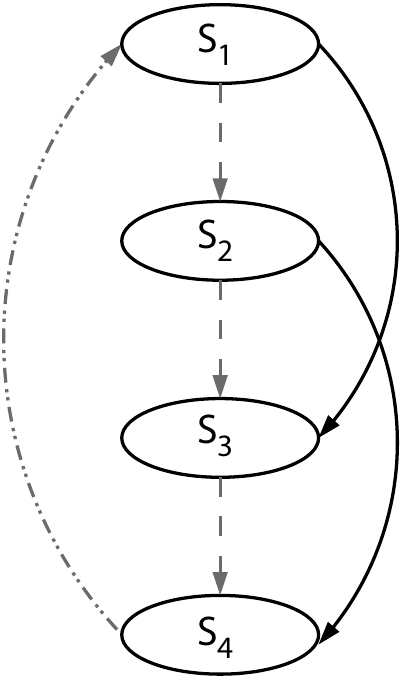}}
\hspace{1cm}
{\includegraphics{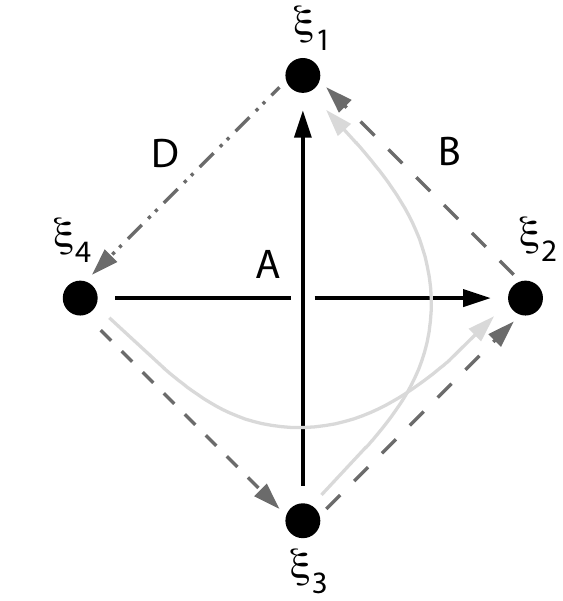}} \\
(a) \hspace{5.5cm} (b)
\caption{(a) The relationships of four species in the Jungle game; arrows point from winner  
to loser.
(b) The four species Jungle game network: 2-dimensional connections are indicated by the letter A and solid lines, 1-dimensional connections in a $\Delta$-clique by B and dashed lines,  and other 1-dimensional connections by D and dash-dotted lines. 
The grey arrows signal the existence of a $\Delta$-clique.
There are two $\Delta$-cliques in the heteroclinic network of the Jungle game with four species: $\Delta_{321}$ and $\Delta_{432}$.}
\label{fig:4-species}
\end{figure}

Modelling the interactions as Lotka-Volterra, we obtain the following system of ODEs
\begin{equation}\label{eq:4-species}
 \begin{cases}
    \Dot{x}_1=x_1 \left(1- R+e_{21}x_2 + e_{31}x_3 - c_{41}x_4\right) \\
    \Dot{x}_2=x_2 \left(1- R-c_{12}x_1 + e_{32}x_3 + e_{42}x_4\right) \\
    \Dot{x}_3=x_3 \left(1- R-c_{13}x_1 - c_{23}x_2 + e_{43}x_4\right) \\
    \Dot{x}_4=x_4 \left(1-R+e_{14}x_1 - c_{24}x_2 - c_{34}x_3\right)
 \end{cases}
\end{equation}
where $R=x_1 + x_2 + x_3 + x_4$ and $x_i \in \R_+$ denotes individuals of the species $S_i$. The equilibrium corresponding to the survival of only species $S_j$ is denoted by $\xi_j$.
We label the coefficients in such a way that the indices $ij$ denote an eigenvalue of the Jacobian matrix at $\xi_i$ in the direction of $x_j$.

\begin{theorem}\label{th:inv-sphere}
If $e_{ij}<1$ for all $i,j$ then there exists an attracting invariant sphere for the dynamics of \eqref{eq:4-species}. Furthermore, system \eqref{eq:4-species} supports the heteroclinic network of Figure~\ref{fig:4-species} (b).
\end{theorem}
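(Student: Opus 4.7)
I would split the statement into its two assertions and treat them in order. For the attracting invariant sphere, the key observation is that under $e_{ij}<1$ the system \eqref{eq:4-species} is a strictly competitive Lotka--Volterra system on $\overline{\R^4_+}$: rewriting $\dot x_i = x_i\bigl(1-\sum_j A_{ij}x_j\bigr)$ with $A_{ii}=1$ and $A_{ij}\in\{1-e_{ji},\,1+c_{ij}\}$ according to whether $S_i$ wins over $S_j$ or not, the hypothesis forces every $A_{ij}>0$. Each coordinate hyperplane is flow-invariant because of the factor $x_i$ in $\dot x_i$, so $\overline{\R^4_+}$ is forward invariant. A short estimate of $\dot R$ with $R=\sum_i x_i$ shows that the diagonal $-R^2$ contribution dominates the bilinear cross-terms for $R$ large, giving $\dot R<0$ there; hence all non-trivial positive orbits are forward bounded. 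Invoking the carrying-simplex theorem for strictly competitive Kolmogorov systems then produces an attracting invariant Lipschitz $3$-manifold $\Sigma\subset\overline{\R^4_+}\setminus\{0\}$ whose relative boundary in $\partial\R^4_+$ is a topological $2$-sphere (the triangulated boundary of a tetrahedron); this is the attracting invariant sphere on which the heteroclinic structure sits.

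For the heteroclinic network my plan is to build the connections from the invariant coordinate subspaces, ordered by dimension. The axes carry only the trivial equilibrium and the single-species equilibria $\xi_i$. In each of the six two-dimensional coordinate planes the subsystem is a two-species competitive Lotka--Volterra; the Jacobian eigenvalue at $\xi_i$ in the transverse $x_j$-direction is $e_{ij}$ (respectively $-c_{ij}$) when $S_i$ loses to (respectively wins over) $S_j$, and the inequalities $A_{ij}<1<A_{ji}$ place the subsystem in the competitive-exclusion regime, so standard phase-plane analysis yields a $1$-dimensional saddle-to-sink connection $\xi_{\textnormal{loser}}\to\xi_{\textnormal{winner}}$. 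Iterating over the six planes produces every $1$-dimensional connection in Figure~\ref{fig:4-species}(b). For the two $2$-dimensional short connections of the cliques $\Delta_{321}$ and $\Delta_{432}$ I would then analyse the three-dimensional hyperplanes $\{x_4=0\}$ and $\{x_1=0\}$; in each of these the three remaining species form a transitive (non-cyclic) hierarchy, so $\xi_1$ (respectively $\xi_2$) is the unique global sink and the $2$-dimensional unstable manifold of the source $\xi_3$ (respectively $\xi_4$) is contained in the hyperplane and falls onto the sink, delivering the required $2$-dimensional connection. The remaining hyperplanes $\{x_2=0\}$ and $\{x_3=0\}$ host RSP-type cyclic subsystems and contribute only the $1$-dimensional connections already accounted for. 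Assembling everything reproduces the network of Figure~\ref{fig:4-species}(b).

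The principal technical obstacle I foresee is making the two $2$-dimensional short connections of the $\Delta$-cliques fully rigorous: one must rule out an interior positive equilibrium or any recurrent interior set inside the transitively hierarchical three-dimensional hyperplanes that would obstruct the source-to-sink flow. This can be handled either by exhibiting a Lyapunov function adapted to the dominance hierarchy or by combining the absence of an interior fixed point (which follows from the strict dominance encoded in the $A_{ij}$) with the carrying-simplex structure obtained in the first part. Everything else reduces to eigenvalue book-keeping at the four equilibria $\xi_i$ and standard two-species phase-plane analysis.
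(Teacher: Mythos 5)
Your argument is essentially correct, but it reaches the invariant attractor by a genuinely different route from the paper. The paper does not use the carrying-simplex machinery at all: it substitutes $x_i=X_i^2$, which turns \eqref{eq:4-species} into a $\mathbb{Z}_2^4$-equivariant vector field with homogeneous cubic nonlinear part $Q$, verifies by a direct computation that $\langle Q(X),X\rangle<0$ precisely because every coefficient $1-e_{ij}$ is positive, and then invokes Theorem~5.1 of Field \cite{Field1989} to obtain an attracting invariant topological $S^3$. Your route via Hirsch's carrying-simplex theorem for strictly competitive Kolmogorov systems (with the $\dot R$ estimate supplying dissipativity) is a legitimate and arguably more standard alternative in population dynamics, but be aware that it produces a $3$-ball, not a sphere: the ``sphere'' of the statement is literally the object obtained after the square-root unfolding, namely the sixteen symmetric copies of your carrying simplex glued along the coordinate hyperplanes; your identification of the boundary $2$-sphere of the simplex as ``the'' attracting invariant sphere is off by one dimension and should be corrected or rephrased (the carrying simplex itself is the restriction of the paper's $S^3$ to $\overline{\R^4_+}$). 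What Field's theorem buys is that the sphere statement is immediate and the hypothesis $e_{ij}<1$ appears transparently as the sign condition on $\langle Q(X),X\rangle$; what your approach buys is that it stays in the original coordinates and gives additional structure (Lipschitz graph, unordered manifold) that could be reused later. For the second assertion the paper is far terser than you are --- it simply notes that the coordinate hyperplanes are flow-invariant and that there are no equilibria off the coordinate axes in $\R^4_+$ --- so your plane-by-plane construction of the saddle-sink connections, and in particular your explicit treatment of the $2$-dimensional short connections inside $\{x_1=0\}$ and $\{x_4=0\}$ together with the acknowledged need to exclude interior equilibria and recurrence there, is more detailed than (and consistent with) what the paper actually writes; the eigenvalue bookkeeping and the competitive-exclusion analysis in the six coordinate planes all check out against the labelling conventions of \eqref{eq:4-species}.
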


\begin{proof}
We use Theorem 5.1 in Field~\cite{Field1989} to prove the existence of an attracting invariant sphere. In order to verify the hypotheses we change variables to obtain a vector field whose nonlinear part is homogeneous of degree 3. It is easy to verify that defining $X_i^2 = x_i$ with $X=(X_1,\ldots,X_4)$, and denoting by $Q(X)$ their non-linear part, the ODEs in the new variables $X_i$ satisfy the hypotheses of this theorem provided $e_{ij}<1$ for all $i,j$. In fact, using the standard inner product, $\langle ., . \rangle$, we obtain
\begin{eqnarray*}
\langle Q(X), X \rangle & = & \frac{1}{2} X_1^2 \left( -X_1^2 - (1-e_{21})X_2^2 - (1-e_{31})X_3^2 - (1+c_{41})X_4^2\right) + \\
 & + &  \frac{1}{2} X_2^2 \left( -(1+c_{12})X_1^2 - X_2^2 - (1-e_{32})X_3^2 - (1-e_{42})X_4^2\right) + \\
 & + &  \frac{1}{2} X_3^2 \left( -(1+c_{13})X_1^2 - (1+c_{23})X_2^2 - X_3^2 - (1-e_{43})X_4^2\right) + \\
 & + &  \frac{1}{2} X_4^2 \left( -(1-e_{14})X_1^2 - (1+c_{24})X_2^2 - (1+c_{34})X_3^2 - X_4^2\right)
\end{eqnarray*}
It is clear that $\langle Q(X), X \rangle < 0$ if $e_{ij}<1$ for all $i,j$.

That the vector field supports a heteroclinic network, follows from the facts that there are no equilibria outside of the coordinate axes in $\R_+^4$ and that the subspaces $\{x_i=0\}$ are flow-invariant. This can be checked directly from \eqref{eq:4-species}.
\end{proof}

There are three cycles in the heteroclinic network in the dynamics of the Jungle Game with four species: $[\xi_1 \to \xi_4 \to \xi_3 \to \xi_2 \to \xi_1]$, 
$[\xi_1 \to \xi_4 \to \xi_2 \to \xi_1]$ 
and $[\xi_1 \to \xi_4 \to \xi_3 \to \xi_1]$ and two $\Delta$-cliques ($\Delta_{321}$ and $\Delta_{432}$).

We assume that the interaction between any two of the species has the same strength when they occur along edges of similar type.
This allows us to group the eigenvalues of the Jacobian matrix at each equilibrium according to the type of connection, as suggested by Figure~\ref{fig:4-species}(b): species $S_i$ wins over species $S_{i+1}$ along a connection of type B, species $S_i$ wins over species $S_{i+2}$ along a connection of type B, and $S_4$ wins over $S_1$ along a connection of type D. We define
\begin{eqnarray*}
    e_A = e_{31}=e_{42} \qquad & \quad e_B=e_{43}=e_{32}=e_{21} \quad & \qquad e_D=e_{14} \\
    c_A = c_{13}=c_{24} \qquad & \quad c_B=c_{34}=c_{23}=c_{12} \quad & \qquad c_D=c_{41}.
\end{eqnarray*}

As usual, we classify these eigenvalues as radial, contracting, expanding and transverse.
As pointed out in the Introduction, the work of \cite{Lutz2013} presents the mean field model in the particular case when $c_B=c_D$ and $e_D=e_B$.

\subsection{Asymptotic stability of the network}\label{subsec:stab-network}

We prove that, for an open set of parameter values, the heteroclinic network in the Jungle Game with four species is asymptotically stable. This guarantees that all trajectories that begin in a neighbourhood of the network remain close to, and eventually converge to, the network. When combined with the stability of each cycle, this provides a complete description of the long-run dynamics. In a population model this property is essential in order to ensure that the future outcome around the network is only that of the prescribed relationships of the species present.

We use the results described in Subsection~\ref{subsec:back-stab-net} and, without loss of generality, assume further that
\begin{equation}
 \min c > \max e \; , \qquad  e_A > e_B , \qquad \mbox{ and   }   \qquad c_A > c_B.
 \label{eq:assumption}
\end{equation}
The first condition is the weakest usual necessary condition for stability. The second and third conditions determine the tangency of trajectories in a $\Delta$-clique: trajectories move away from a b-point in the direction of the first-long connection and approach a e-point along the second-long connection. This corresponds to the way in which trajectories in a $\Delta$-clique are typically drawn.
Note that the authors in \cite{Podvigina_et_al_2020} are considering as transverse the eigenvalues that are neither radial, contracting nor expanding for any cycle through $\xi_j$. 
These eigenvalues are called global transverse in \cite{castro2022_switching}.
The Jungle Game network does not have 
global  transverse eigenvalues.
\begin{theorem}\label{th:stab-net}
Assume that \eqref{eq:assumption} holds.
The heteroclinic network supported by \eqref{eq:4-species} is asymptotically stable if
\begin{equation}\label{eq:suf-cond-net}
c_B^2c_D>(c_B+e_A)e_B e_D. 
\end{equation}
\end{theorem}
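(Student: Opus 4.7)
The plan is to apply Corollary~4.18 of \cite{Podvigina_et_al_2020}, which guarantees asymptotic stability of the network once the product $\prod_j \rho_j$ along every constituent heteroclinic cycle is strictly greater than one. The task thus reduces to reading off the eigenvalues at each $\xi_j$, classifying each $\xi_j$ with respect to the two $\Delta$-cliques $\Delta_{321}$ and $\Delta_{432}$, computing the four $\rho_j$ via Lemmas~\ref{lemma1}--\ref{lemma3}, and comparing each cycle product with~$1$.

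Because the coordinate hyperplanes are flow-invariant, the Jacobian of \eqref{eq:4-species} at each axial equilibrium $\xi_j$ is diagonal in the standard basis; the eigenvalue in direction $x_k$ is the coefficient of $x_j$ in the bracket of the $k$-th equation, so at each $\xi_j$ one finds the radial eigenvalue $-1$ and three eigenvalues of the form $e_\bullet$ or $-c_\bullet$, with no transverse eigenvalue in the global sense of \cite{Podvigina_et_al_2020}. Inspecting the two $\Delta$-cliques then yields: $\xi_1$ and $\xi_4$ are respectively the e-point of $\Delta_{321}$ and the b-point of $\Delta_{432}$, so neither is an m-point and Lemma~\ref{lemma1} applies; $\xi_2$ is the m-point of $\Delta_{321}$ with a single expanding eigenvalue $e_B$, so Lemma~\ref{lemma2} applies; and $\xi_3$ is the m-point of $\Delta_{432}$, with contracting eigenvalue $-c_B$ along the f-long direction $x_4$ and two expanding eigenvalues $e_A$ and $e_B$, the latter being the s-long direction. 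Lemma~\ref{lemma3} then applies with $e_2=e_A$ and produces $\rho_3 = c_B/(c_B+e_A)$.

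With the four $\rho_j$ in hand, one multiplies along each of the three cycles of the network. Under \eqref{eq:assumption} the two 3-cycles $[\xi_1\to\xi_4\to\xi_2\to\xi_1]$ and $[\xi_1\to\xi_4\to\xi_3\to\xi_1]$ contribute only ratios of the form $c/e$ that already exceed~$1$, and the inequality $c_A>c_B$ combined with \eqref{eq:suf-cond-net} will carry their $\rho$-products above the threshold. The decisive constraint comes from the 4-cycle $[\xi_1\to\xi_4\to\xi_3\to\xi_2\to\xi_1]$, whose $\rho$-product, after cancellations, reduces to
\[
\frac{c_B^{2}\,c_D}{(c_B+e_A)\,e_B\,e_D},
\]
and the requirement that this exceed $1$ is exactly condition \eqref{eq:suf-cond-net}.

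The principal obstacle is the bookkeeping: correctly pairing each eigenvalue with the f-long and s-long directions at the m-points $\xi_2$ and $\xi_3$, and correctly handling the branching at $\xi_1$ and $\xi_4$, where there are several incoming contractions and, at $\xi_4$, two outgoing expansions of different strengths. Once this matching is made consistently, the cycle-by-cycle verification collapses to the single inequality \eqref{eq:suf-cond-net}, and Corollary~4.18 of \cite{Podvigina_et_al_2020} delivers the asymptotic stability of the whole network.
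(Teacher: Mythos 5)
Your proposal is correct and follows essentially the same route as the paper: apply Corollary~4.18 of \cite{Podvigina_et_al_2020}, compute the $\rho_j$ from Lemmas~\ref{lemma1}--\ref{lemma3} by identifying $\xi_2$ and $\xi_3$ as the m-points of $\Delta_{321}$ and $\Delta_{432}$ (with $\rho_2=1$ and $\rho_3=c_B/(c_B+e_A)$), and observe that the binding constraint is the product over the four-node cycle, which is exactly \eqref{eq:suf-cond-net}. The only slip is the phrase claiming both 3-cycles contribute ``only ratios of the form $c/e$'': the cycle $[\xi_1\to\xi_4\to\xi_3\to\xi_1]$ also carries the factor $c_B/(c_B+e_A)$ from the m-point $\xi_3$, but as you then note, its product exceeds that of the 4-cycle since $c_A>c_B$, so the conclusion stands.
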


\begin{proof}
The quantity $\rho_j$ is obtained from Lemmas \ref{lemma1}, \ref{lemma2} and \ref{lemma3} for all equilibria in each one of the cycles and depends on whether the node $\xi_j$ is, or is not, an m-point in one or more $\Delta$-cliques of the network. 
Since the Jungle Game network does not have global transverse eigenvalues, for this network  in Lemma \ref{lemma1} we have $\rho_j=c/e$.

From Corollary 4.18 of \cite{Podvigina_et_al_2020}, the Jungle Game network of four species is asymptotically stable if, for any cycle $\Sigma$ of the network, 
$\rho( \Sigma)=\prod \rho_j>1$,
 that is, if the following conditions are satisfied:

\begin{eqnarray}
\rho(\Sigma_{142})=\rho_1 \rho_4 \rho_2 &=& \frac{c_B}{e_D} \cdot \frac{c_D}{e_A} >1 \label{T1} \\ \nonumber \\ 
\rho(\Sigma_{143})=\bar{\rho}_1 \bar{\rho}_4 \bar{\rho}_3 &=& \frac{c_A}{e_D} \cdot \frac{c_D}{e_B} \cdot \frac{c_B}{c_B+e_A} >1  \label{T2} \\ \nonumber \\
\rho(\Sigma_{1432})=\tilde{\rho}_1 \tilde{\rho}_4 \tilde{\rho}_3 \tilde{\rho}_2 &=& \frac{c_B}{e_D} \cdot \frac{c_D}{e_B} \cdot \frac{c_B}{c_B+e_A} >1 \label{T3}
\end{eqnarray}
Equations (\ref{T1}), (\ref{T2}) and (\ref{T3}) are the $\rho(X)$ when $X$ is the cycle $\Sigma_{142}$, $\Sigma_{143}$ and $\Sigma_{1432}$ respectively. Once condition (\ref{eq:assumption}) holds, then condition (\ref{T1}) is automatically satisfied and moreover, if condition (\ref{T3}) holds, then condition (\ref{T2}) holds also. 
Condition \eqref{T3} can be written as \eqref{eq:suf-cond-net}.
\end{proof}

\subsection{Stability of the cycles}\label{subsec:stab-cycles}

Using the results from \cite{Podvigina_Ashwin_2011}, we can calculate the stability indices for points in all the connections in the network of the Jungle Game with four species which are contained in the coordinate planes. 
Therefore, when referring to the cycles in the Jungle Game network we study trajectories that occur in coordinate planes. This is a natural choice since the interactions are between two species at a time: when $S_4$ is eating $S_1$, the species $S_2$ and $S_3$ are not directly involved.

The next three propositions show that only the cycle among species $S_1$, $S_2$, and $S_4$ exhibits any form of stability, indeed the strongest type of stability admissible for a heteroclinic cycle in a network. Together with Theorem~\ref{th:stab-net}, we know that for almost any initial condition close to any part of the network its trajectory converges to the cycle $\Sigma_\text{142}$.

The stability of the heteroclinic cycles, or lack thereof, provides information about the survival of the species $S_i$, $i=1, \hdots, 4$.
Only the species corresponding to the nodes of a stable heteroclinic cycle are expected to coexist. Because the heteroclinic cycle with four nodes, $[\xi_1 \rightarrow \xi_4 \rightarrow \xi_3 \rightarrow \xi_2 \rightarrow \xi_1]$, is always unstable we know that the coexistence of all four species is not expected in the long-run. The same applies to the heteroclinic cycle with three nodes, $[\xi_1 \rightarrow \xi_4 \rightarrow \xi_3 \rightarrow \xi_1]$, meaning that these three species are not expected to coexist in the long-run. The essential asymptotic satbility of the heteroclinic cycle with three nodes, $[\xi_1 \rightarrow \xi_4 \rightarrow \xi_2 \rightarrow \xi_1]$, shows that the species $S_1, S_2, S_4$ coexist in the long-run while the species $S_3$ becomes extinct. Furthermore, the connections in this heteroclinic cycle indicate that the surviving species interact as a RSP game.

\begin{proposition} \label{prop:cycle142}
Assume that \eqref{eq:assumption} and \eqref{eq:suf-cond-net} hold.
Then the cycle $\Sigma_\text{142}$ is essentially asymptotically stable and the local stability indices are:

\begin{equation*} 
\sigma_{14} = 1-\frac{e_B}{e_A}>0, \qquad \quad \sigma_{42}= +\infty \qquad and \qquad \sigma_{21} = +\infty
\end{equation*}
\end{proposition}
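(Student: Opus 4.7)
My strategy is to apply Lemma \ref{lemma1:calc-index} directly to $\Sigma_\text{142}$. The cycle lies in the coordinate hyperplane $\{x_3=0\}$, which is the three-dimensional fixed-point subspace of the reflection $x_3\mapsto -x_3$, so $\Sigma_\text{142}$ is a simple heteroclinic cycle of type $B_3^-$ and the transverse eigenvector at every one of its three nodes points along $x_3$. Reading off the Jacobian of \eqref{eq:4-species} at each axis equilibrium identifies the contracting, expanding and transverse eigenvalues in terms of $c_A,c_B,c_D,e_A,e_B,e_D$. Among the three nodes only $\xi_4$ has a positive transverse eigenvalue (namely $e_B$), so I would label it as node~$1$ of the cycle, with node~$2=\xi_2$ and node~$3=\xi_1$, in order to put the sign pattern in the form required by Lemma \ref{lemma1:calc-index}.

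With this labeling the quantities $a_j=c_j/e_j$ and $b_j=-t_j/e_j$ are straightforward to compute; in particular $b_1=-e_B/e_A<0$, $b_2=c_B/e_B>0$, $b_3=c_A/e_D>0$, so the sign hypothesis of the lemma holds. The condition $a_1a_2a_3>1$, i.e.\ $c_Ac_Bc_D>e_Ae_Be_D$, is immediate from $\min c>\max e$ in \eqref{eq:assumption}. For the second inequality of case~(2), clearing denominators in $b_1a_2a_3+b_3a_2+b_2$ gives $c_A^2 e_A + c_B e_A e_D - c_A c_B e_B$, and using $c_A>c_B$ and $e_A>e_B$ from \eqref{eq:assumption} already forces $c_A^2 e_A>c_A c_B e_B$, so the whole quantity is positive. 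Hence case~(2) of Lemma \ref{lemma1:calc-index} applies.

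Then the indices are given by the formulas in the lemma. The formula $\sigma_2=+\infty$ corresponds, after translating the cyclic label back to the connection entering $\xi_2$, to $\sigma_{42}=+\infty$. For $\sigma_3=f^{\ind}(b_3+b_1a_3,1)$, the first argument equals $(c_Ae_A-c_Be_B)/(e_Ae_D)$, again positive under \eqref{eq:assumption}, so both arguments of $f^{\ind}$ are non-negative and $\sigma_3=+\infty$, yielding $\sigma_{21}=+\infty$. Finally $\sigma_1=f^{\ind}(-e_B/e_A,1)$; since $-1<-e_B/e_A<0$, the definition of $f^{\ind}$ places this in the branch that produces a strictly positive finite value of the form stated, so $\sigma_{14}>0$. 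Because all three local stability indices are strictly positive, the result of Lohse \cite{Lohse2015} implies that $\Sigma_\text{142}$ is essentially asymptotically stable.

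The main obstacle I foresee is the bookkeeping that translates the lemma's cyclic subscript $\sigma_j$ into the connection label $\sigma_{ij}$; once the correct cyclic starting point is chosen (the unique node where the transverse eigenvalue is positive), the remaining algebraic inequalities needed to invoke case~(2) of Lemma \ref{lemma1:calc-index} and to identify $b_3+b_1a_3>0$ follow from \eqref{eq:assumption} alone, and condition \eqref{eq:suf-cond-net} enters only as a standing hypothesis ensuring the asymptotic stability of the ambient network.
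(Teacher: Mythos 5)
Your proposal is correct and follows essentially the same route as the paper: relabel the nodes of $\Sigma_{142}$ so that $\xi_4$ (the only node with a positive transverse eigenvalue, $e_B$) plays the role of node~1 in Lemma~\ref{lemma1:calc-index}, verify $a_1a_2a_3>1$ and $b_1a_2a_3+b_3a_2+b_2>0$ from \eqref{eq:assumption}, and read off the indices, concluding essential asymptotic stability from Lohse's criterion. The only caveat is your phrase ``of the form stated'': evaluating $f^{\ind}(-e_B/e_A,1)$ actually gives $e_A/e_B-1$ rather than $1-e_B/e_A$ (the paper's own proof computes the former while its statement displays the latter), but both are positive under $e_A>e_B$, so the conclusion is unaffected.
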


\begin{proof}
 For this cycle, only for the Jacobian matrix at $\xi_4$ there is a positive transverse eigenvalue. Re-ordering the indices so that the quantity $b_1$ in Lemma~\ref{lemma1:calc-index} corresponds to $\xi_4$, we obtain
\begin{eqnarray*}
a_4a_2a_1&=&\dfrac{c_Ac_Bc_D}{e_Ae_Be_D}>1 \qquad \quad \; \;
\end{eqnarray*}
\vspace{-0.2cm}
and
\vspace{-0.2cm}
\begin{eqnarray*}
b_4a_2a_1+b_1a_2+b_2 &=& -\dfrac{e_Bc_Ac_B}{e_Ae_Be_D}+\dfrac{c_A^2}{e_Be_D}+\dfrac{c_B}{e_B}\\[0.1cm]
&=& \dfrac{c_Be_Ae_D+c_A(c_Ae_A-c_Be_B)}{e_Ae_Be_D} > 0.
\end{eqnarray*}
The first inequality follows from the first in \eqref{eq:assumption}.
The last inequality follows from the last two in \eqref{eq:assumption}.
Then, 
\begin{eqnarray*}
\sigma_{14}&=& f^\textnormal{index}(b_4,1)=f^\textnormal{index}\left(-\dfrac{e_B}{e_A},1\right)=\dfrac{e_A}{e_B}-1>0 \\[0.1cm]
\sigma_{42}&=& +\infty \\[0.1cm]
\sigma_{21}&=& f^\textnormal{index}(b_1+b_4a_1,1)=f^\textnormal{index}\left(\dfrac{c_Ae_A-c_Be_B}{e_Ae_D},1\right)=+\infty
\end{eqnarray*}
\end{proof}

We note that the connection $C_{42}$ is 2-dimensional. Proposition~\ref{prop:cycle142} calculates the stability index for the 1-dimensional trajectory $\kappa_{42} \subset C_{42}$ contained in the coordinate plane spanned by the axes containing $\xi_2$ and $\xi_4$. 

\begin{proposition} \label{prop:cycle143}
Assume that \eqref{eq:assumption} and \eqref{eq:suf-cond-net} hold.
Then the cycle $\Sigma_\text{143}$ is completly unstable.
\end{proposition}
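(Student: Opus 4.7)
My plan is to invoke Lemma~\ref{lemma2:calc-index}, applied to the subcycle of $\Sigma_\text{143}$ whose connecting trajectories are 1-dimensional and lie inside coordinate planes, exactly as was done for $\Sigma_\text{142}$ in Proposition~\ref{prop:cycle142}. The geometric picture behind the statement is that $S_2$ preys on both $S_3$ and $S_4$, so at two of the three equilibria in this cycle the ``missing'' species $S_2$ has a positive growth rate and can invade; this should force complete instability. Concretely, I would read off the Jacobian of \eqref{eq:4-species} at each of the three saddles $\xi_1,\xi_4,\xi_3$ and pick out the transverse eigenvalue, which is always the $x_2$-eigenvalue since $x_2$ is the coordinate not used by the cycle. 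From the $\dot x_2$ line of \eqref{eq:4-species} these values equal $-c_B$ at $\xi_1$, $e_A$ at $\xi_4$, and $e_B$ at $\xi_3$. Hence in the notation of Lemma~\ref{lemma2:calc-index} exactly one of the $b_j$'s is positive and two are negative, placing us squarely in the setting of that lemma once we cyclically relabel so that the positive-$b$ node $\xi_1$ plays the role of index~$3$.

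With this relabelling ($\xi_4,\xi_3,\xi_1 \leftrightarrow 1,2,3$) the remaining contracting and expanding eigenvalues give $a_1=c_D/e_B$, $a_2=c_B/e_A$, $a_3=c_A/e_D$, $b_1=-e_A/e_B$, $b_2=-e_B/e_A$, $b_3=c_B/e_D$. It then suffices to verify that at least one of the three instability inequalities in part~(1) of Lemma~\ref{lemma2:calc-index} holds. The first inequality $a_1a_2a_3<1$ is ruled out by \eqref{eq:assumption}, which forces $a_1a_2a_3=c_Ac_Bc_D/(e_Ae_Be_D)>1$. I would therefore target the second, $b_2a_1a_3+b_1a_3+b_3<0$, which after multiplying through by $e_D$ reduces to $c_B<c_Ac_D/e_A+c_Ae_A/e_B$. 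This is immediate from \eqref{eq:assumption}: since $c_A>c_B$ and $c_D>e_A$, already $c_Ac_D/e_A>c_A>c_B$, so the inequality holds.

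Part~(1) of Lemma~\ref{lemma2:calc-index} then returns all three stability indices along the coordinate-plane subcycle equal to $-\infty$, which by Podvigina and Ashwin~\cite{Podvigina_Ashwin_2011} together with Lohse~\cite{Lohse2015} is precisely the definition of complete instability of $\Sigma_\text{143}$. I expect the only delicate step to be the bookkeeping that aligns our cyclic ordering with the convention $b_3>0$ of Lemma~\ref{lemma2:calc-index}; once that is settled the verification is mechanical. I also note that the network-level hypothesis~\eqref{eq:suf-cond-net} does not appear to be needed here, since~\eqref{eq:assumption} alone already triggers the instability of this cycle.
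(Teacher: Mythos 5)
Your proposal is correct and follows essentially the same route as the paper: both apply Lemma~\ref{lemma2:calc-index} to the coordinate-plane subcycle with the relabelling $\xi_4,\xi_3,\xi_1\leftrightarrow 1,2,3$, identify the same transverse eigenvalues ($-c_B$ at $\xi_1$, $e_A$ at $\xi_4$, $e_B$ at $\xi_3$), and verify the same instability inequality $c_B<c_A\left(c_D/e_A+e_A/e_B\right)$ from \eqref{eq:assumption} alone. Your side remark that \eqref{eq:suf-cond-net} is not actually used is consistent with the paper's own argument, which likewise only invokes \eqref{eq:assumption}.
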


\begin{proof}
This cycle satisfies the hypotheses of Lemma~\ref{lemma2:calc-index}, where two transverse eigenvalues are positive: at $\xi_3$ and at $\xi_4$. 
Denoting the quantities of subsection \ref{subsec:cycles} for this cycle by $\bar{a}_j$ and $\bar{b}_j$ and again re-ordering the indices we obtain, as  in Proposition~\ref{prop:cycle142},
\begin{eqnarray*}
\bar{a}_4\bar{a}_3\bar{a}_1&=&\dfrac{c_Ac_Bc_D}{e_Ae_Be_D}>1 \qquad \quad \; \;
\end{eqnarray*}
and
\begin{eqnarray*}
\bar{b}_3\bar{a}_4\bar{a}_1+\bar{b}_4\bar{a}_1+\bar{b}_1 &=& -\dfrac{e_Bc_Ac_D}{e_Ae_Be_D}-\dfrac{e_Ac_A}{e_Be_D}+\dfrac{c_B}{e_D} < 0 \Leftrightarrow\\[0.1cm]
&=& c_B < c_A\left(\dfrac{c_D}{e_A} + \dfrac{e_A}{e_B} \right).
\end{eqnarray*}
The last inequality holds from \eqref{eq:assumption} since $c_B<c_A$ and the term in brackets is greater than 2.
Then all $\sigma_j=-\infty$ and the cycle $\Sigma_{143}$ is not an attractor. 
\end{proof}

\begin{proposition} \label{prop:cycle1432}
Assume that \eqref{eq:assumption} and \eqref{eq:suf-cond-net} hold.
Then the cycle $\Sigma_\text{1432}$ is completely unstable.
\end{proposition}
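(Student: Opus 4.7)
The plan is to compute the local stability index along each of the four connecting trajectories of $\Sigma_{1432}$, paralleling the proofs of Propositions~\ref{prop:cycle142} and~\ref{prop:cycle143}, but now for a simple robust cycle of type $B_4^-$ rather than $B_3^-$. What is needed beyond what is transcribed in Subsection~\ref{subsec:cycles} is the four-node analog of Lemma~\ref{lemma2:calc-index}, obtained from the Poincar\'e-map calculation in \cite{Podvigina_Ashwin_2011} by composing through four nodes rather than three; the non-attracting branch asserts that if the product $a_1 a_4 a_3 a_2$ fails to exceed one, or if any one of four cyclic sums in the $a_j$'s and $b_j$'s (one for each node with positive transverse eigenvalue) is non-positive, then every local stability index equals $-\infty$.

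I would first identify the eigenvalues of the Jacobian at each of the four equilibria, in the grouped notation introduced after Theorem~\ref{th:inv-sphere}. Along $\Sigma_{1432}$ the contracting, expanding and transverse eigenvalues are $(-c_B,\,e_D,\,-c_A)$ at $\xi_1$, $(-c_D,\,e_B,\,e_A)$ at $\xi_4$, $(-c_B,\,e_B,\,e_A)$ at $\xi_3$ and $(-c_B,\,e_B,\,-c_A)$ at $\xi_2$. Forming $a_j=c_j/e_j$ and $b_j=-t_j/e_j$ one obtains $a_1=c_B/e_D$, $a_4=c_D/e_B$, $a_3=a_2=c_B/e_B$, $b_1=c_A/e_D>0$, $b_2=c_A/e_B>0$, and $b_3=b_4=-e_A/e_B<-1$, the last inequality being a direct consequence of $e_A>e_B$ in \eqref{eq:assumption}. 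The key structural feature is that two of the four normalized transverse quantities lie strictly below $-1$, a phenomenon that does not arise in the proofs of Propositions~\ref{prop:cycle142} and~\ref{prop:cycle143}.

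The crucial step is then to verify that the cyclic sum corresponding to the transverse direction at $\xi_4$, and symmetrically at $\xi_3$, is strictly negative. Its dominant term is $b_4\,a_3 a_2 a_1 = -\tfrac{e_A\,c_B^3}{e_B^3\,e_D}$, whose magnitude is large because $e_A>e_B$, while the positive compensating terms carry the factor $c_A$ together with lower powers of $1/e_B$. A direct substitution using \eqref{eq:assumption} together with \eqref{eq:suf-cond-net} produces the desired negativity. By the $B_4^-$ analog of part~(1) of Lemma~\ref{lemma2:calc-index}, every local stability index along $\Sigma_{1432}$ then equals $-\infty$, so the cycle is completely unstable.

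The main obstacle I expect is the explicit statement of the four-node version of Lemma~\ref{lemma2:calc-index}, which is not transcribed in the excerpt; its derivation from \cite{Podvigina_Ashwin_2011} is routine but tedious. A geometric sanity check I would carry out in parallel is that the positive transverse eigenvalue $e_A$ at both $\xi_4$ and $\xi_3$ exceeds the expanding eigenvalue $e_B$ along $\Sigma_{1432}$, so the short connections of the $\Delta$-cliques $\Delta_{432}$ and $\Delta_{321}$ outrun the corresponding first-long connections; almost every trajectory passing close to these two nodes is redirected onto the two-dimensional connection $C_{42}\subset\Sigma_{142}$ or the connection $C_{31}\subset\Sigma_{143}$ respectively, and combined with the essential asymptotic stability of $\Sigma_{142}$ from Proposition~\ref{prop:cycle142} this reinforces the conclusion that the $\delta$-local basin of $\Sigma_{1432}$ has measure zero.
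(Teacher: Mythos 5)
Your strategy is genuinely different from the paper's, and it rests on a tool that does not exist in the form you need. The paper disposes of $\Sigma_{1432}$ in two lines: the connecting trajectory $[\xi_2\to\xi_1]$ is shared with $\Sigma_{142}$, and Proposition~\ref{prop:cycle142} gives $\sigma_{21}=+\infty$ for $\Sigma_{142}$, so an essentially full-measure portion of a neighbourhood of that trajectory is absorbed by $\Sigma_{142}$, whose connection $C_{42}$ leaves every small neighbourhood of $\Sigma_{1432}$; hence the index of the same trajectory relative to $\Sigma_{1432}$ is $-\infty$, and Corollary 4.1 of \cite{Podvigina_Ashwin_2011} then forces all indices along the cycle to be $-\infty$. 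No new index computation is needed.

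Your route instead hinges on a ``four-node analog of Lemma~\ref{lemma2:calc-index} for a cycle of type $B_4^-$'', and this is where the gap lies. In $\R^4$ a type-$B$ cycle is by definition contained in a three-dimensional fixed-point subspace, which can meet at most three coordinate axes; a cycle visiting all four axes, such as $\Sigma_{1432}$, is of type $C$ (it would be $C_4^-$ in the Krupa--Melbourne classification), and for such cycles the transverse direction at a node points towards another node of the same cycle (at $\xi_4$ the transverse direction $x_2$ contains $\xi_2$) --- exactly the feature that produces the $\Delta$-cliques. The stability conditions in \cite{Podvigina_Ashwin_2011} for these cycles are therefore expressed through products of transition matrices rather than the scalar cyclic sums you posit, so the ``routine but tedious'' derivation you defer is not obtained by composing the $B_3^-$ computation through a fourth node. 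The decisive inequality (``a direct substitution \dots\ produces the desired negativity'') is also asserted rather than verified. Your eigenvalue bookkeeping at the four nodes is correct, and your closing geometric observation --- that the transverse eigenvalue $e_A>e_B$ at $\xi_3$ and $\xi_4$ ejects almost every nearby trajectory from $\Sigma_{1432}$ towards the e.a.s.\ cycle $\Sigma_{142}$ --- is precisely the intuition that the paper's shared-connection argument makes rigorous; I would replace the hypothetical $B_4^-$ computation by that argument.
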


\begin{proof}
In cycle $\Sigma_{142}$ the stability index for the connecting trajectory in $\Sigma_{142}$  from equilibrium $\xi_2$ to $\xi_1$ is $\sigma_{21}=+\infty$, according to Proposition~\ref{prop:cycle142}. Hence the stability index for the same connecting trajectory in any other cycle of the network is equal to $-\infty$. It follows from Corollary 4.1 in \cite{Podvigina_Ashwin_2011} that all stability indices are equal to $-\infty$ and the cycle is c.u.
\end{proof}

\subsection{Numerical results}\label{simulations}

In order to illustrate the results which we have analytically proved in Subsection~\ref{subsec:stab-cycles}, we present some images obtained by simulating the dynamics of \eqref{eq:4-species}.
We note that the same behaviour is observed for different parameter values as long as they satisfy the sufficient condition for the asymptotic stability of the network. We do not reproduce the figures here since they are identical to Figures~\ref{NumericResults} and \ref{NumericResultsZoom}.

Figure \ref{NumericResults} represents the time course of a trajectory of \eqref{eq:4-species} for some parameter values which fulfill the conditions for asymptotic stability of the network in Theorem~\ref{th:stab-net}. After some time the  trajectory approaches $\Sigma_{142}$, the e.a.s. cycle. As time increases the trajectory stays longer near each equilibrium.
The survival of the three species $S_1, S_2, S_4$ and their cyclic dominance can be seen by the successive periods of $x_i$ close to 1 for $i \neq 3$. The fact  that $x_3$ becomes equal to zero indicates the extinction of species $S_3$.
Note that this happens even though the initial condition in this example is close to the equilibrium $x_3=1$, $x_1=x_2=x_4=0$.

\begin{figure}[h!]
 \centering
\scalebox{0.53}{\includegraphics{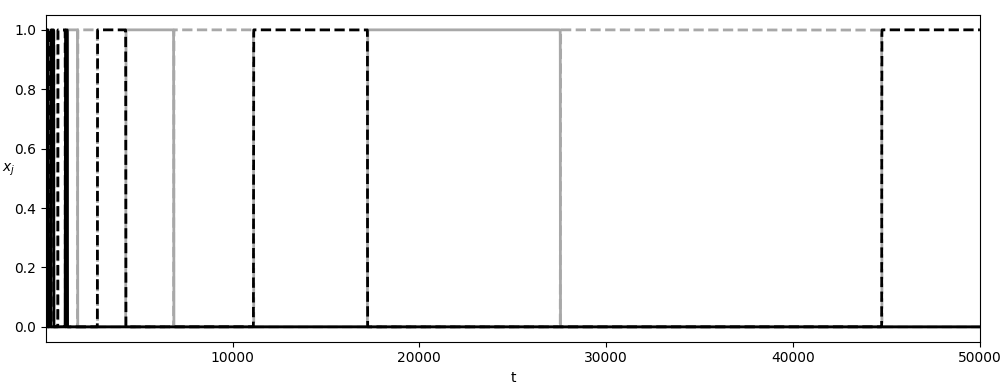}}
\caption{A typical time course for \eqref{eq:4-species}.
The lines in dashed grey, solid grey, solid black and dashed black are the coordinates $x_1, x_2, x_3$ and $x_4$ respectively. The parameter values for the simulation are: $c_A=1.2$, $c_B=1$, $c_D=1.1$, $e_A=0.7$, $e_B=0.65$, $e_D=0.72$. 
The initial condition is $x_1=x_2=x_4=0.1$ and $x_3=1$.}
\label{NumericResults}
\end{figure}

\begin{figure}[h!]
\centering
\scalebox{0.53}{\includegraphics{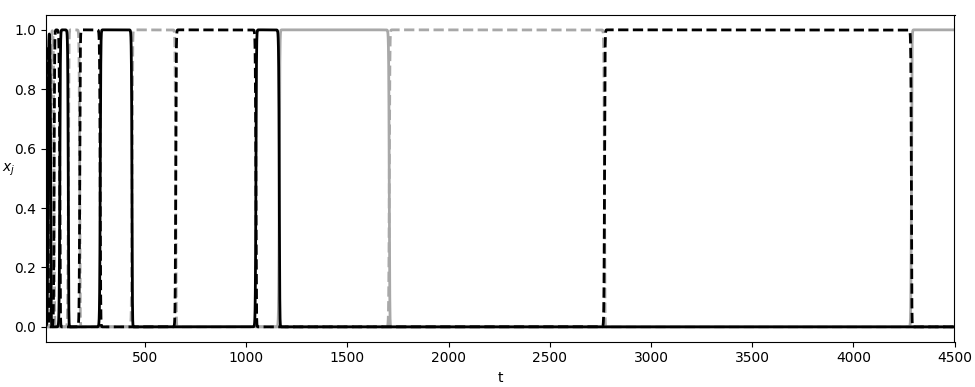}}
\caption{Detail of the trajectory of Figure~\ref{NumericResults} over a shorter time interval, showing a transient visit to $\xi_3$ (solid black). Conventions as in Figure~\ref{NumericResults}.}
\label{NumericResultsZoom}
\end{figure}

The same trajectory  as in Figure~\ref{NumericResults} is shown in a shorter time scale in Figure~\ref{NumericResultsZoom}. Before the trajectory reaches the e.a.s.\ cycle, some other sequences of equilibria are visited. The trajectory visits the cycle $\Sigma_{143}$ during some small period of time and once the trajectory leaves that cycle and visits the equilibrium $\xi_2$, then, for the remaining time it cyclically follows the sequence [$\xi_1 \to \xi_4 \to \xi_2 \to \xi_1$], i.e., the e.a.s.\ cycle.

\subsection{An alien in a game of three species}
In what remains of this section we interpret the Jungle Game with four species to obtain information about the outcome of having a fourth species trying to invade three species that play a RSP game. Let $S_1$, $S_2$ and $S_3$ be the original species playing RSP and assume that $S_{i-1}$ wins when confronted with $S_i$, $i \mbox{ mod } 3$. Let $A$ denote the invading species, an {\em alien}, which can be weak or strong depending on whether it wins when confronted with either one or two of the original species, respectively. We denote a weak alien by $A_w$ and a strong alien by $A_s$. The graphs of the interaction are represented in Figure~\ref{fig:graph-alien}. Without loss of generality, we choose $S_3$ as the only species that wins against $A_s$; we choose $S_3$ as the only species that loses against $A_w$. 

\begin{figure}[ht]
 \centering
\includegraphics[width=1.1\textwidth]{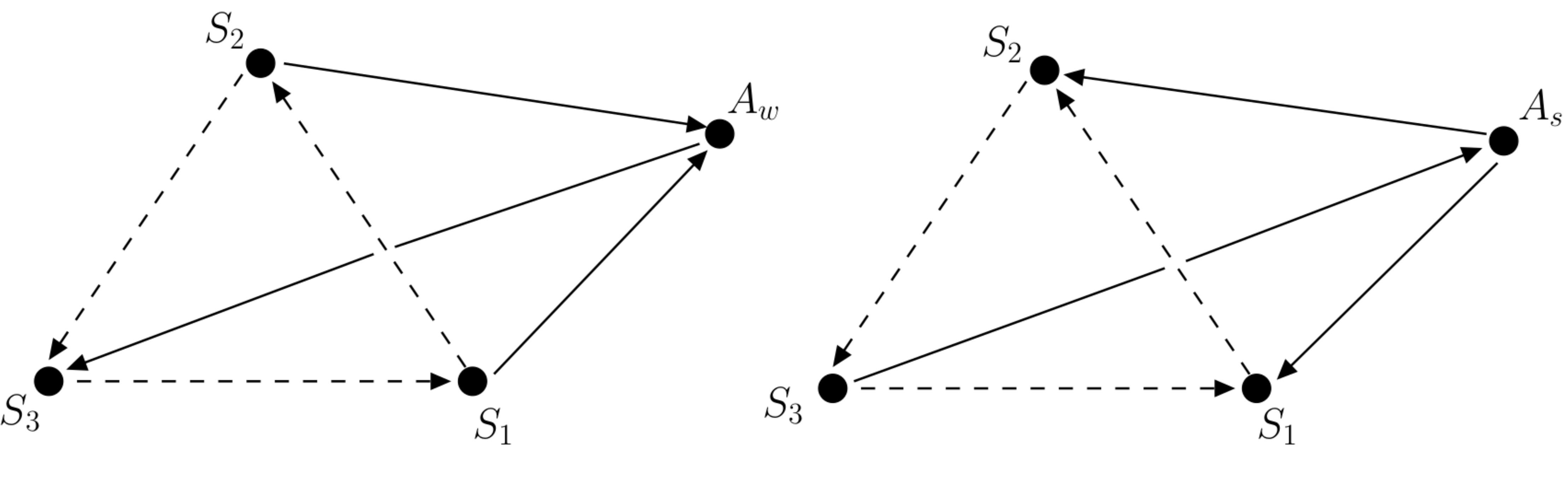}
\caption{Relationships of species in an invasion of a RSP game by an alien: the predator-prey relationships among the original population of RSP players and a weak alien, $A_w$ (left), and a strong alien, $A_s$ (right). The dashed lines indicate the interations among the original population of RSP players and the solid lines correspond to interactions with the alien.}
\label{fig:graph-alien}
\end{figure}

\begin{figure}[ht!]
\centering
{\includegraphics[width=0.7\textwidth]{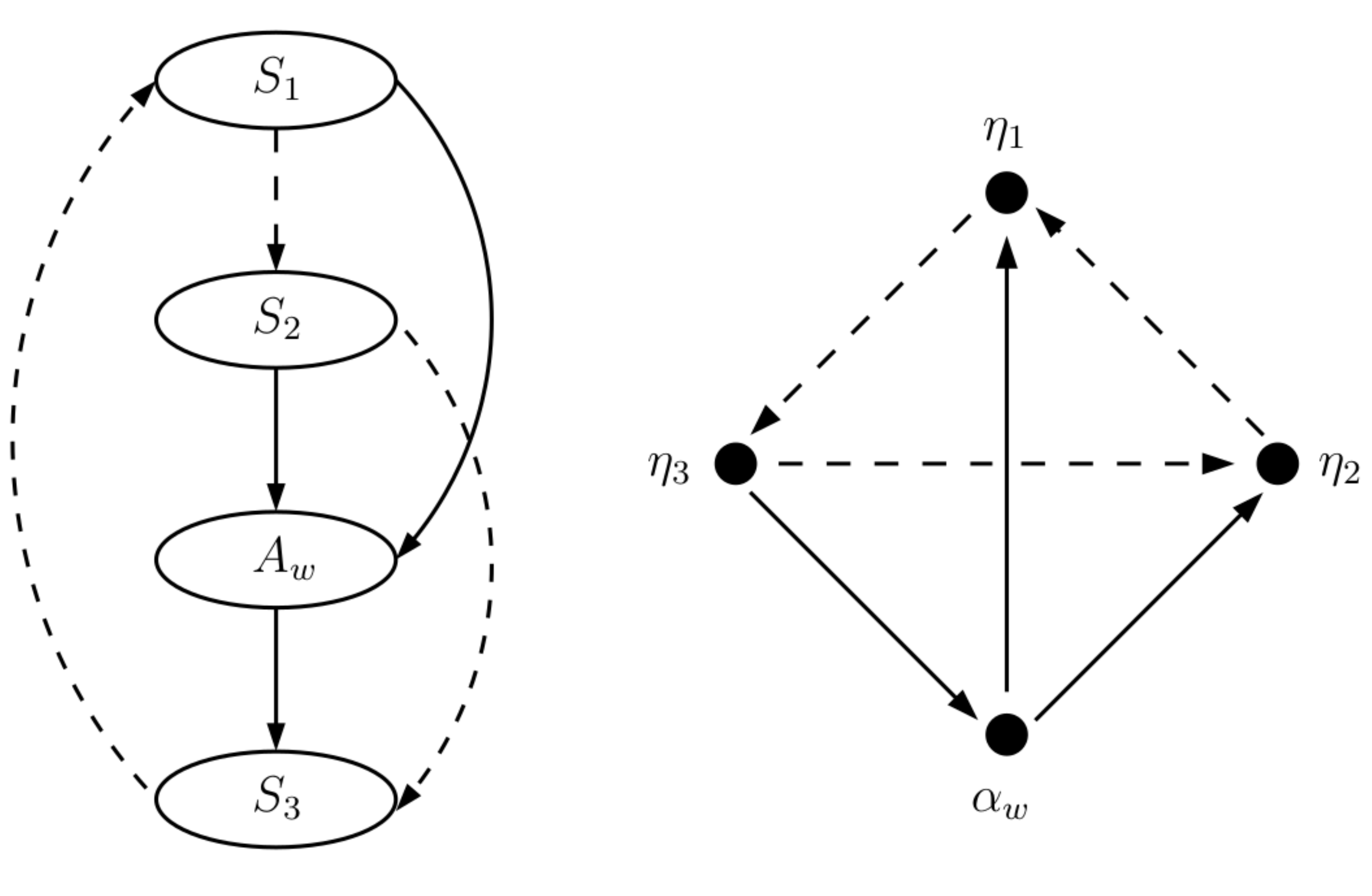}} \\
(a) The weak alien only wins against one species. \\
\mbox{} \\
{\includegraphics[width=0.7\textwidth]{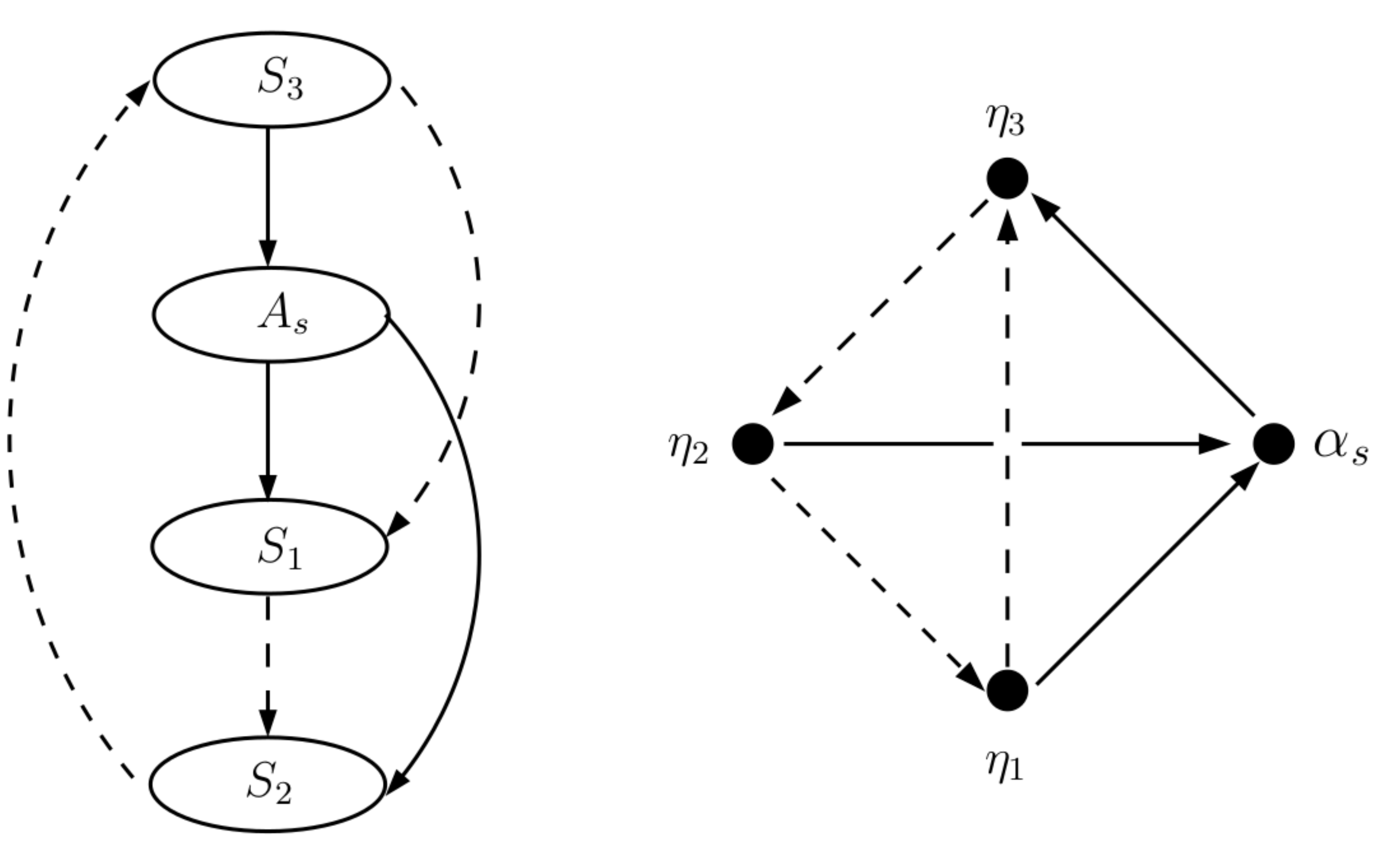}} \\
(b) The strong alien wins against two species.
\caption{On the left, the predator-prey relationships among the original population of RSP players and a weak alien in panel (a) and a strong alien in panel (b). The interactions are the same even though the species change place corresponding to different survival outcomes. On the right, the network of an invasion of a RSP game with nodes $\eta_j$ by a weak alien, $\alpha_w$, in panel (a) and a strong alien, $\alpha_s$, in panel (b). 
Dashed lines indicate the interations among the original population of RSP players and the solid lines correspond to interactions with the alien, as in Figure~\ref{fig:graph-alien}.}
\label{fig:graph_network-alien}
\end{figure}

It is simple to see that these correspond to the heteroclinic network of the Jungle Game with four species by comparing the relationships in Figure~\ref{fig:4-species}(a) with those in Figure~\ref{fig:graph_network-alien}. 
The results in Subsection~\ref{subsec:stab-cycles} establish that in the interaction with a weak alien the only e.a.s.\ cycle is that connecting the equilibria $\eta_1$, $\eta_2$, and $\eta_3$; when the fourth species is a strong alien the only e.a.s.\ cycle is that connecting the equilibria $\eta_2$, $\eta_3$, and $\alpha_s$.
Thus the weak alien $A_w$ is suppressed when interacting with a population of RSP players,
whereas the strong alien $A_s$ replaces one of the original species in playing a RSP game with the other two. It is noteworthy, although not surprising, that the original species which is replaced by $A_s$ is the species that loses when confronted with the species that beats the alien.

From Figure~\ref{fig:graph_network-alien} it is easy to see that after the introduction of a weak alien the original three species survive.
However, introducing a strong alien leads to the survival of the strong alien together with $S_2$ and $S_3$. 
This was anticipated by Case {\em et al.} \cite{Case2010} in their maxim ``the prey of the prey of the weakest is the least likely to survive'' albeit for a different model. From Figure~\ref{fig:graph_network-alien} (b) we see that $S_2$ is the weakest species and its prey is $S_3$. The prey of $S_3$ is $S_1$ which is the original species that becomes extinct by being replaced by the strong alien.
Of  $S_1$ and $S_2$ that are the two weakest, in the sense of losing to two others, the one who is also a prey of prey of one of the weakest does not survive.

\section{Concluding remarks}

We study the stability of the heteroclinic cycles of a Jungle Game with four species under Lotka-Volterra competition. We prove that only the cycle involving the bottom, top and second to the top species is stable while the whole network is asymptotically stable. Therefore, any initial configuration with four species evolves through competition to only the three species in the e.a.s.\ cycle connecting $S_1$, $S_2$, and $S_4$. These three species interact through a RSP game so that cyclic dominance of three species persists in the long-run.

The Jungle Game is re-interpreted as the invasion of a populatioin interacting under RSP to determine when the invading species replaces one of the original RSP players. This allows us to answer a question that differs from the usual ones about coexistence and extinction in that it is a question about {\em replacement} of species in the presence of an invader.
Moreover, we show that cyclic dominance  of four species is not a possible outcome in this setting.

\end{document}